\DeclareMathAlphabet{\mathpzc}{OT1}{pzc}{m}{it}
\newtheorem{thm}{Theorem}
\newtheorem{preremark}[thm]{Remark}
\numberwithin{equation}{section}
\newtheorem{prop}[thm]{Proposition}
\newtheorem{lemma}[thm]{Lemma}
\numberwithin{equation}{section}
\newcommand{\R}{\mathbb R}
\newcommand{\N}{\mathbb N}
\newcommand{\meanbar}[1]{%
\setbox0 = \hbox{$#1 \int$}
\hbox to 0pt{%
\thinspace
\hskip 0.1\wd0
\raise 0.5\ht0
\hbox{%
\lower 0.5\dp0
\hbox{\rule{0.8\wd0}{2\linethickness}}
}%
\hss
}%
}
\def\@tocline#1#2#3#4#5#6#7{\relax
  \ifnum #1>\c@tocdepth 
  \else
    \par \addpenalty\@secpenalty\addvspace{#2}%
    \begingroup \hyphenpenalty\@M
    \@ifempty{#4}{%
      \@tempdima\csname r@tocindent\number#1\endcsname\relax
    }{%
      \@tempdima#4\relax
    }%
    \parindent\z@ \leftskip#3\relax \advance\leftskip\@tempdima\relax
    \rightskip\@pnumwidth plus4em \parfillskip-\@pnumwidth
    #5\leavevmode\hskip-\@tempdima
      \ifcase #1
       \or\or \hskip 1em \or \hskip 2em \else \hskip 3em \fi%
      #6\nobreak\relax
    \hfill\hbox to\@pnumwidth{\@tocpagenum{#7}}\par
    \nobreak
    \endgroup
  \fi}
\author{Georgiana Chatzigeorgiou}
\title[Regularity for the Fully Nonlinear Parabolic Thin Obstacle Problem]{Regularity for the Fully Nonlinear Parabolic Thin Obstacle Problem}
\begin{document}

\begin{abstract} 
We prove $C^{1,\alpha}$ regularity (in the parabolic sense) for the viscosity solution of a boundary obstacle problem with a fully nonlinear parabolic equation in the interior. Following the method which was first introduced for the harmonic case by L. Caffarelli in 1979, we extend the results of I. Athanasopoulos (1982) who studied the linear parabolic case and the results of E. Milakis and L. Silvestre (2008) who treated the fully nonlinear elliptic case.
\end{abstract}

\let\thefootnote\relax\footnotetext{
{\bf Mathematics Subject Classification (2010).} 35R35; 35R45; 35K55.

{\bf Keywords.} Parabolic thin obstacle problem; Fully non-linear parabolic equations; Free boundary problems; 

\hspace{1,75cm} Regularity of the solution.

{University of Cyprus, Department of Mathematics \& Statistics, P.O. Box 20537, Nicosia, CY- 1678, CYPRUS

\tt chatzigeorgiou.georgiana@ucy.ac.cy}
}


\maketitle

\section{Introduction}

In the present work we intent to study the regularity of the viscosity solution of the following thin obstacle problem in a half-cylinder,
\begin{align} \label{thin_prob}
\begin{cases} 
F(D^2u)-u_t=0, &\ \ \ \ \text{ in } \ \ Q_1^+ \\
u_y \leq 0,&\ \ \ \ \text{ on } \ \ Q_1^* \\
u \geq \varphi,&\ \ \ \ \text{ on } \ \ Q_1^* \\
u_y=0,&\ \ \ \ \text{ on } \ \ Q_1^* \cap \{u>\varphi\} \\
u=u_0,&\ \ \ \ \text{ on } \ \ \partial_p Q_1^+ \setminus Q_1^*
\end{cases}
\end{align}
where, $F$ is a uniformly elliptic operator on $S_n$ with ellipticity constants $\lambda$ and $\Lambda$ and $\varphi: \overline{Q}_1^* \to \R$, $u_0: \partial_p Q_1^+ \setminus Q_1^* \to \R$ are given functions. Function $\varphi$ is the so-called obstacle and $u_0 \geq \varphi$ on $\partial_p Q_1^*$ for compatibility reasons. Our aim is to prove that $u$ is in $H^{1+\alpha}$ up to the flat boundary $Q_1^*$. The main theorem of this paper follows (notations' details can be found in subsection \ref{notations}).

\begin{thm} \label{main_thin}
Let $P_0=(x_0,t_0) \in  Q_{1/2}^*$ be a free boundary point, there exist universal constants $0<\alpha<1, C>0, 0<r<<1$ and an affine function $R_0(X)=A_0+B_0 \cdot (X-(x_0,0))$, where $A_0=u(P_0)$, $B_0=Du(P_0)$ so that
$$|u(X,t)-R_0(X)| \leq C  \left( |X-(x_0,0)|+|t-t_0|^{1/2}\right)^{1+\alpha}, \ \ \text{ for any } \ (X,t) \in Q_r^+(P_0).$$
\end{thm}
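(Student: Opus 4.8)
The plan is to follow the Caffarelli iteration scheme adapted to the parabolic, fully nonlinear setting. The core of the argument is a single-step improvement-of-flatness lemma: if $u$ solves \eqref{thin_prob} and is already $\varepsilon$-close (after subtracting a suitable affine function $R$ tangent to the obstacle) to the solution of a model thin obstacle problem in some parabolic cylinder $Q_\rho^+$, then in a smaller cylinder $Q_{\rho/2}^+$ it is $\varepsilon/2$-close (in the appropriately scaled sense) to another affine function $\tilde R$, with controlled change in the coefficients $|A - \tilde A| + \rho|B - \tilde B| \lesssim \varepsilon \rho^{1+\alpha}$. Iterating this at dyadic scales $\rho_k = 2^{-k} r$ and summing the geometric series in the coefficient changes produces the limiting affine function $R_0$ with $A_0 = u(P_0)$, $B_0 = Du(P_0)$, and the claimed decay estimate. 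Because the parabolic scaling is anisotropic, every step must be carried out with the parabolic distance $\big(|X-(x_0,0)| + |t-t_0|^{1/2}\big)$ in place of the Euclidean one, and the rescalings $u_\rho(X,t) = u(\rho X, \rho^2 t)/\rho^{1+\alpha}$ must be shown to preserve the structure of the problem (the rescaled operator $F_\rho(M) = \rho^{1-\alpha} F(\rho^{\alpha-1} M)$ is again uniformly elliptic with the same constants, and the Neumann-type conditions on $Q_1^*$ are scale-invariant).

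First I would normalize: translate $P_0$ to the origin, subtract the affine function $R(X) = u(P_0) + Du(P_0)\cdot X$, and observe that $w = u - R$ still solves a thin obstacle problem for the shifted operator $\tilde F(M) = F(M + D^2R) = F(M)$ (since $R$ is affine) with a shifted obstacle $\tilde\varphi = \varphi - R$ satisfying $\tilde\varphi(0)=0$, $D\tilde\varphi(0)=0$. The regularity of $\varphi$ (presumably $C^{1,\alpha}$ in the parabolic sense, as in the hypotheses assembled earlier in the paper) then gives $|\tilde\varphi(X,t)| \lesssim \big(|X|+|t|^{1/2}\big)^{1+\alpha}$ near the origin, so the obstacle is a negligible perturbation at each dyadic scale. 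Next I would invoke the compactness/comparison machinery already developed for this problem: the optimal $C^{1,\alpha}$ (parabolic $H^{1+\alpha}$) regularity up to $Q_1^*$ for the solution of the \emph{model} problem — the one with $F$ replaced by its linearization or with zero obstacle — which should be available from the earlier sections, together with interior $C^{1,\alpha}$ estimates for $F(D^2u)-u_t=0$ and the De Giorgi–Nash–Moser / Krylov–Safonov-type oscillation decay for the normal derivative trace on the flat boundary.

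The improvement-of-flatness step itself I would prove by contradiction and compactness: if no such $\alpha$, $C$ worked, there would be a sequence of solutions $u_k$, flat to order $\varepsilon_k \to 0$ but failing the next-scale flatness; rescale each by its own flatness parameter, extract (via the uniform estimates just cited, which give equicontinuity in the parabolic metric and uniform bounds on the traces) a limit $u_\infty$ solving the \emph{constant-coefficient} model thin obstacle problem in $Q_1^+$; apply the known regularity for that limit problem to get the desired quadratic-type decay with a \emph{fixed} universal constant; and contradict the assumed failure for large $k$. The passage to the limit must handle the free boundary condition $u_y \le 0$, $u \ge \varphi$, $u_y \,(u-\varphi) = 0$ carefully, using stability of viscosity solutions under uniform convergence and the vanishing of the rescaled obstacles; this is where the fully nonlinear nature bites, since one needs the limit operator to be well-defined (a subsequential locally uniform limit of the $F_{\rho_k}$, uniformly elliptic with the same ellipticity) and one must verify the mixed Signorini-type boundary condition is preserved — I expect \emph{this compactness-and-stability step for the nonlinear operator together with the boundary condition} to be the main obstacle, with the anisotropic parabolic bookkeeping in the iteration being the second, more technical, difficulty. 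Once the one-step lemma is in hand, the dyadic iteration, the summation yielding $R_0$, and the final estimate for arbitrary $(X,t) \in Q_r^+(P_0)$ (choosing $k$ with $2^{-k-1}r \le |(X,t)|_{\rm par} < 2^{-k}r$ and using the triangle inequality between $R_0$ and $R_k$) are routine.
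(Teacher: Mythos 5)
Your proposal takes a genuinely different route from the paper, and unfortunately it has a gap at its very center. The paper's actual argument is Caffarelli's 1979 scheme (as extended in Athanasopoulos 1982 and Milakis--Silvestre 2008), which is \emph{not} improvement of flatness: one first proves universal Lipschitz and semi-concavity/convexity bounds (Proposition~\ref{semiconcavity}), then shows via penalization that the normal derivative trace $\sigma=u_{y^+}$ satisfies $\sigma\le 0$ pointwise (Lemma~\ref{sigmasign}), then establishes the quantitative decay $0\ge\sigma\ge -C\,p(\cdot,P_0)^\alpha$ near free boundary points by iterating a measure estimate for $\Omega^*_\gamma$ together with a boundary-Harnack-type barrier lemma and the semi-concavity in $y$ (Lemmas~\ref{measure}, \ref{Harnack_type}, \ref{sigmaregularity}); the theorem then follows by reading the problem as a nonhomogeneous Neumann problem with $H^\alpha$ data and invoking the boundary $H^{1+\alpha}$ estimate of~\cite{CM19}. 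No blowup, no compactness, and no appeal to regularity of a ``model'' thin obstacle problem occur anywhere. What is called ``Caffarelli's scheme'' in the Signorini literature is precisely this direct decay argument for $u_{y^+}$, not the compactness/improvement-of-flatness machinery from his later viscosity-solution work.

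The central gap in your outline is the claim that the optimal $H^{1+\alpha}$ regularity for the blowup limit ``should be available from the earlier sections.'' It is not --- that regularity is exactly what the theorem asserts, and the earlier sections only give Lipschitz/semi-concavity and $\sigma\le 0$. Moreover you misidentify the limit problem. With the parabolic rescaling $v_k(X,t)=(u(\rho_k X,\rho_k^2 t)-R_k(\rho_k X))/\varepsilon_k$ and the target flatness $\varepsilon_k\sim\rho_k^{1+\alpha}$, $\alpha<1$, the rescaled equation is $s_k^{-1}F(s_k D^2v_k)-(v_k)_t=0$ with $s_k=\varepsilon_k/\rho_k^2\sim\rho_k^{\alpha-1}\to\infty$; the limit operator is therefore the \emph{recession function} $F^\infty(M)=\lim_{s\to\infty}F(sM)/s$, a positively $1$-homogeneous, still fully nonlinear uniformly elliptic operator, not the linearization $F_{ij}(0)D_{ij}$. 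The Signorini regularity for such an operator is not known a priori --- it is exactly of the type the present theorem is proving --- so the compactness step as written is circular. (You also correctly flag the stability of the complementarity condition under limits as delicate; a template for it is Proposition~\ref{u^k_convergence}, but that argument in turn leans on the reflection principle and the structural hypothesis $F_{in}=0$, neither of which enters your outline.) To make a compactness approach work one would have to prove the regularity of the $F^\infty$-Signorini problem by some independent means, which in this generality amounts to redoing the paper's direct argument.
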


The classical obstacle problem as well as the thin obstacle problem are originated in the context of elasticity since model the shape of an elastic membrane which is pushed by an obstacle (which may be very thin) from one side affecting its shape and formation. The same model appears in control theory when trying to evaluate the optimal stopping time for a stochastic process with payoff function. Important cases of obstacle type problems occur when the operators involved are fractional powers of the Laplacian as well as nonlinear operators
since they appear, among others, in the analysis of anomalous diffusion, in quasi-geostrophic flows, in biology modeling flows through semi-permeable membranes for certain osmotic phenomena and when pricing American options regulated by assets evolving in relation to jump processes.

Thin (or boundary) obstacle problem (or Signorini's problem) was extensively studied in the elliptic case. For Laplace equation and more general elliptic PDEs in divergence form the problem can be also understood in the variational form, that is as a problem of minimizing a suitable functional over a suitable convex class of functions which should stay above the obstacle on a part of the boundary (or on a sub-manifold of co-dimension at least $1$) of the domain of definition. The $C^{1,\alpha}$-regularity of the weak solution for the harmonic case was proved first in 1979 by L. Caffarelli in \cite{caf79} who treats also the divergence case for regular enough coefficients. Results for more general divergence-type elliptic operators can be found in \cite{u}. For optimal regularity and regularity of the free boundary in the case of linear elliptic equations we refer to \cite{ac} and \cite{ACS} where the harmonic case is studied and to \cite{nestor}, \cite{GPS16}, \cite{KRS17} for the case of variable coefficients. Similar results exist also for the case of fractional Laplacians. Regularity of the solution for the classical (thick) obstacle problem was studied in \cite{S07}, then via the extension problem introduced in \cite{CS07} the thin obstacle problem was treated in \cite{CSS}. Finally, for fully nonlinear elliptic operators, regularity of the viscosity solution was proved in \cite{ms2} (see also \cite{Fern16}) while for optimal and free boundary regularity the only existing work is \cite{R-OS17}.

The corresponding regularity theory for thin obstacle problems of parabolic type is much less developed. The $C^{1,\alpha}$-regularity of the weak solution was obtained in 1982 by I. Athanasopoulos in \cite{aparab} who studied the case of heat equation and the case of smooth enough linear parabolic equation. The case of more general linear parabolic operators was examined in \cite{U85} and \cite{AU96}. Optimal and free boundary regularity for the caloric case have been obtained very recently in \cite{ACM19} (see also \cite{DGP17}). Finally for the case of parabolic operators of fractional type we refer the reader to \cite{ACM18} and \cite{CF13}.

In this paper our purpose is to combine the techniques of \cite{caf79}, \cite{aparab} and \cite{ms2} adapting them in our fully nonlinear parabolic framework. To achieve this we need up to the boundary H\"older estimates for viscosity solutions of nonlinear parabolic equations with Neumann boundary conditions (as \cite{ms1} is used in \cite{ms2}). This type of estimates have developed recently by the author and E. Milakis in \cite{CM19}.

The paper is organized as follows. In Section 2 we give a list of notations used throughout this text. We discuss also the assumptions we make on the data of our problem and finally we prove a reflection property which is useful in our approach. In Section 3 we examine the semi-concavity properties of our solution. We prove Lipschitz continuity in space variables, a lower bound for $u_t$ and for the second tangential derivatives of $u$ (semi-convexity) and an upper bound for the second normal derivative of $u$ (semi-concavity). All these bounds are universal and hold up to the flat boundary $Q_1^*$. The boundedness of the first and second normal derivatives ensures the existence of $u_{y^+}$ on $Q_1^*$. Our first intention is to prove that $u_{y^+} \leq 0$ on $Q_1^*$ (which apriori holds only in the viscosity sense). To achieve this we use the penalized problem defined and studied in Section 4. Finally in Section 5 we prove the main theorem. To do so we obtain first an estimate in measure (Lemma \ref{measure}) for $u_{y^+}$ on $Q_1^*$ and subsequently we see how such a property can be carried inside $Q_1^+$ (Lemma \ref{Harnack_type}). An iterative application of the above two properties gives the regularity of $u_{y^+}$ on $Q_1^*$ around free boundary points (Lemma \ref{sigmaregularity}) and then our problem can be treated as a non-homogeneous Neumann problem.

\section{Preliminaries}

\subsection{Notations}\label{notations}
We denote $X=(x,y) \in \R^n$, where $x \in \R^{n-1}$ and $y \in \R$ and $P=(X,t)\in \R^{n+1}$, where $X$ are the space variables and $t$ is the time variable. The Euclidean ball in $\R^n$ and the elementary cylinder in $\R^{n+1}$ will be denoted by
$$B_r(X_0) := \{ X \in \R^n : |X-X_0| < r\}, \ \ Q_r(X_0,t_0) := B_r (X_0) \times (t_0 - r^2, t_0]$$
respectively. We define the following half and thin-balls in $\R^n$, for $r>0, x_0 \in \R^{n-1}$
$$B_r^+(x_0):= B_r(x_0,0) \cap \{y >0 \},  \ \ B_r^*(x_0):= B_r(x_0,0) \cap \{y =0 \}$$
and the following half and thin-cylinders in $\R^{n+1}$, for $r>0, x_0 \in \R^{n-1}_+, t_0 \in \R$
$$Q_r^+(x_0,t_0):= Q_r(x_0,0,t_0) \cap \{y >0 \},  \ \ Q_r^*(x_0,t_0):= Q_r(x_0,0,t_0) \cap \{y =0 \}.$$
Note that, $\Omega^\circ, \overline{\Omega}, \partial \Omega$ will be the interior, the closure and the boundary of the domain $\Omega \subset \R^{n+1}$, respectively, in the sense of the Euclidean topology of $\R^{n+1}$. We define also the parabolic interior to be,
$$int_p(\Omega ) := \{ (X,t) \in \R ^{n+1} : \  \text{ there exists } \  r>0 \  \text{ so  that } \ Q_r^\circ(X,t) \subset \Omega \}$$
and the parabolic boundary, $\partial _p (\Omega ) := \overline{\Omega } \setminus int_p(\Omega ).$ Let us also define the parabolic distance for $P_1=(X,t),\ P_2=(Y,s) \in \R^{n+1} $, $p(P_1,P_2) := \max \{|X-Y|,|t-s|^{1/2}\}$. Note that in this case $Q_r(P_0)$ will be the set $\{ P \in \R^{n+1} : p(P,P_0)<r, t<t_0 \}$.

Next we define the corresponding parabolic H\"older spaces. For a function $f$ defined in a domain $\Omega \subset \R^{n+1}$ we set,

$$[f]_{\alpha; \Omega} := \sup _ {P_1, P_2 \in \overline{\Omega}, P_1 \neq P_2} \frac{|f(P_1) - f(P_2)|}{p(P_1,P_2)^{\alpha}}, \ \ \langle f \rangle _{\alpha+1; \Omega} := \sup _ {(X,t_1), (X,t_2) \in \overline{\Omega} \atop t_1 \neq t_2} \frac{|f(X,t_1) - f(X,t_2)|}{|t_1-t_2|^{\frac{\alpha+1}{2}}}.$$

Then we say that,
\begin{enumerate}
\item[$\bullet$] $f \in H^\alpha (\overline{\Omega})$ if
$||f||_{H^{\alpha} (\overline{\Omega}) } := \sup _{\overline{\Omega}} |f| + [f]_{\alpha; \Omega}  < + \infty .$
\item[$\bullet$] $f \in H^{\alpha+1} (\overline{\Omega})$ if 
$$||f||_{H^{\alpha+1} (\overline{\Omega}) } := \sup _{\overline{\Omega}} |f| +  \sum_{i=1}^n  \sup _{\overline{\Omega}} |D_if| + \sum_{i=1}^n [D_if]_{\alpha; \Omega} +\langle f \rangle _{\alpha+1; \Omega}  < + \infty.$$
\item[$\bullet$] $f \in H^{\alpha+2} (\overline{\Omega})$ if
\begin{align*}
||f||_{H^{\alpha+2} (\overline{\Omega}) } :=& \sup _{\overline{\Omega}} |f| +  \sum_{i=1}^n  \sup _{\overline{\Omega}} |D_if| +\sup _{\overline{\Omega}} |f_t|+ \sum_{i,j=1}^n  \sup _{\overline{\Omega}} |D_{ij}^2f|\\
&+[f_t]_{\alpha; \Omega}
+ \sum_{i,j=1}^n [D_{ij}^2f]_{\alpha; \Omega} +\sum_{i=1}^n \langle D_i f \rangle _{\alpha+1; \Omega}  < + \infty .
\end{align*}
\end{enumerate}
Due to the nonlinear character of our problem, we will mainly prove $H^{\alpha+1}$-regularity results in the punctual sense at a point. We say that $u$ is punctually $H^{\alpha+1}$ at a point $P_1 \in \overline{\Omega}$ if there exists $R_{1;P_1}(X)= A_{P_1}+ B_{P_1} \cdot (X-X_1)$, where $A_{P_1} \in \R$ and $B_{P_1} \in \R^{n}$ and some cylinder $\overline{Q}_{r_1}(P_1) \subset \Omega$, so that for any $0<r<r_1$,
$$|u(X,t) - R_{1;P_1}(X)| \leq K \ r^{1+\alpha}, \ \ \text{ for every } \ (X,t) \in \overline{Q}_{r}(P_1)$$
for some constant $K>0$. 

Finally, $S_n$ denotes the class of symmetric $n \times n$ real matrices.

\subsection{Problem Set-up}
We consider that the solution $u$ of (\ref{thin_prob}) can be recovered as the minimum viscosity supersolution of
\begin{align} \label{perron_thin_prob}
\begin{cases} 
F(D^2v)-v_t\leq 0, &\ \ \ \ \text{ in } \ \ Q_1^+ \\
v_y \leq 0,&\ \ \ \ \text{ on } \ \ Q_1^* \\
v \geq \varphi,&\ \ \ \ \text{ on } \ \ Q_1^* \\
v\geq u_0,&\ \ \ \ \text{ on } \ \ \partial_p Q_1^+ \setminus Q_1^*
\end{cases}
\end{align}
with $u_t$ locally bounded by above in $Q_1^+$ (note that under suitable assumptions on $F$ we have that $u_t$ does exist in $Q_1^+$ once $F(D^2u)-u_t=0$ in $Q_1^+$ in the viscosity sense).

To get the desired regularity we make the following assumptions on $F$ and $u_0$.
\\ $\bullet$ \textbf{Assumptions on} $F$. First we assume that $F$ is convex on $S_n$ so we have interior $H^{2+\alpha}$-estimates for the viscosity solutions (see \cite{Wang2}). Moreover considering the following extension of $F$ in $\R^{n \times n}$
$$F(M) = F \left( \frac{M+M^\tau}{2}\right), \ \ \text{ for } \ \ M \in \R^{n \times n}$$
we assume that $F$ is continuously differentiable in $\R^{n^2}$ and we denote by $F_{ij}:=\frac{\partial F}{\partial m_{ij}}$. We can easily see that $F_{ij}(M)= F_{ji}(M)$ for any $M$. Indeed, let $H^{ij}$ denote the matrix with elements
\begin{align*}
\left(H^{ij}_h \right)_{kl}= \begin{cases}  
0, \ \  \text{ if } \ \ k \neq i \ \text{ or } \ j \neq l, \\
h, \ \  \text{ if } \ \ k = i \ \text{ and } \ j = l
\end{cases} 
\end{align*} 
where $h \in \R$ and observe that $\left(H^{ij}_h\right)^\tau=H^{ji}_h$ then
\begin{align*}
F_{ij}(M)&=  \lim_{h \to 0} \frac{F\left(\frac{M+H^{ij}_h+\left( M+H^{ij}_h\right)^\tau}{2}\right)-F(M)}{h} 
= \lim_{h \to 0}  \frac{F\left(\frac{M+H^{ji}_h+\left( M+H^{ji}_h\right)^\tau}{2}\right)-F(M)}{h}=F_{ji}(M).
\end{align*}
We suppose also that $F_{in}=0$, for any $i=1,\dots , n-1$ (then $F_{ni}=0$ as well). Finally, we assume for convenience that $F(O)=0$ which can be easily removed (subtracting a suitable paraboloid).
\\ $\bullet$ \textbf{Assumptions on} $u_0$. Note that we intend to examine the regularity up to flat boundary $Q_1^*$ (and not up to $\partial_pQ_1^+ \setminus Q_1^*$) thus we may assume that $u_0>\varphi$ on $\partial_pQ_1^*$. Therefore if $v \in C\left( \overline{Q}_1^+\right)$ is the viscosity solution of
\begin{align} \label{rho_def}
\begin{cases} 
F(D^2v)-v_t=0, &\ \ \ \ \text{ in } \ \ Q_1^+ \\
v_y = 0,&\ \ \ \ \text{ on } \ \ Q_1^* \\
v=u_0,&\ \ \ \ \text{ on } \ \ \partial_p Q_1^+ \setminus Q_1^*
\end{cases}
\end{align}
then due to the continuity of $v$ and $\varphi$ and the compactness of $\partial_pQ_1^*$ we see that there exists some $0<\rho<1$ so that $v>\varphi$ on $Q_1^* \setminus Q_{1-\rho}^*$. Then using an ABP-type estimate (see Theorem 5 in \cite{CM19}) we get that $u>\varphi$ on $Q_1^* \setminus Q_{1-\rho}^*$ thus $u_y=0$ on $Q_1^* \setminus Q_{1-\rho}^*$, in the viscosity sense.
\\ $\bullet$ \textbf{Assumptions on} $\varphi$. We assume that $\varphi \in H^{2+\alpha_0} \left( Q_1^* \right)$. 

We denote by $\Delta^*:= \{(x,t) \in Q_1^* : u(x,0,t)=\varphi(x,t)\}$ the \textit{contact set}, by $\Omega^*:= \{(x,t) \in Q_1^* : u(x,0,t)>\varphi(x,t)\}$ the \textit{non-contact set} and by $\Gamma=\partial \Delta^* \cap Q_1^*$ the \textit{free boundary}. We assume that $\Delta^* \neq \emptyset$ since otherwise we would have a Neumann boundary value problem for which the regularity is known (see \cite{CM19}). Note that around the points of int$\left(\Delta^*\right)$ and around the points of $\Omega^*$ we can treat our problem as Dirichlet or Neumann problem respectively. Finally, we denote by $K:=||u||_{L^\infty\left(Q_1^+\right)}+ ||\varphi||_{H^{2+\alpha_0}\left(Q_1^*\right)}$ and in the following a constant $C>0$ that depends only on $K, n, \lambda , \Lambda$ and $\rho$ will be called \textit{universal}. 

\subsection{Reflection Principle}

Here we show a reflection property which will be useful in several times in our approach. We remark that since $F_{in}=F_{ni}=0$ for any $i=1, \dots , n-1$ then for $M=(m_{ij}) \in \R^{n \times n}$ if we denote by $\bar{M}$ the matrix with elements
\begin{align*}
\bar{m}_{ij}:= \begin{cases} m_{ij}, &\text{ if } \ i,j<n \ \ \text{ or } \ \ i=j=n \\
-m_{ij}, &\text{ if } \ i<n \text{ and } j=n \ \ \text{ or } \ \ i=n \text{  and } j<n \end{cases}
\end{align*}
we have that $F(M)=F(\bar{M})$. Observe that Pucci's extremal operators have this property as well. Indeed, $M$ and $\bar{M}$ have the same eigenvalues since,
\begin{align*} 
 \det(\bar{M}-lI_n) = \det \left(  \begin{matrix}
 M_{n-1} -lI_{n-1} &  d^\tau \\
d & m_{nn}-l
\end{matrix} \right) = \det \left(  \begin{matrix}
 M_{n-1} -lI_{n-1}  &  -d^\tau \\
-d & m_{nn}-l
\end{matrix} \right) = \det(M-lI_n)
\end{align*}
where $d:=(-m_{n1},\dots,-m_{nn-1})$, hence $\mathcal{M}^\pm (\bar{M},\lambda, \Lambda )=\mathcal{M}^\pm (M,\lambda, \Lambda )$.

\begin{prop}(Reflection Principle). \label{reflection}
Let $u \in C(Q_1^+ \cup Q^*_1)$ and satisfies $F(D^2u)-u_t\leq 0$ in $Q_1^+$ and $u_y \leq 0$ on $Q^*_1$ in the viscosity sense. Consider the reflected function, 
\begin{align*}
u^*(x,y,t) = \begin{cases} u(x,y,t), &\ \ \ \text{ if } \ \ \ y \geq 0 \\ u(x,-y,t), &\ \ \ \text{ if } \ \ \ y <0 \end{cases}, \ \ \ \text{ for } \ \ (X,t) \in Q_1.
\end{align*}
Then $F(D^2u^*)-u^*_t\leq 0$ in $Q_1$ in the viscosity sense.
\end{prop}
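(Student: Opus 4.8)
The plan is to argue by the viscosity-solution characterization: I must show that $u^*$ cannot be touched from above at an interior point of $Q_1$ by a smooth test function $\phi$ unless $F(D^2\phi) - \phi_t \leq 0$ at that point. Let $P_0 = (x_0, y_0, t_0) \in Q_1$ and let $\phi$ touch $u^*$ from above at $P_0$. There are three cases according to the sign of $y_0$.

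First, if $y_0 > 0$, then near $P_0$ the function $u^*$ coincides with $u$ and $P_0$ lies in the open set $Q_1^+$, so the conclusion is immediate from the hypothesis $F(D^2u) - u_t \leq 0$ in $Q_1^+$. Second, if $y_0 < 0$, then near $P_0$ we have $u^*(x,y,t) = u(x,-y,t)$; composing $\phi$ with the reflection $y \mapsto -y$ produces a test function $\tilde\phi(x,y,t) := \phi(x,-y,t)$ touching $u$ from above at the point $(x_0, -y_0, t_0) \in Q_1^+$, and since the reflection map leaves $F$ invariant by the matrix identity $F(M) = F(\bar M)$ established just above the statement (the Hessian of $\tilde\phi$ at $(x_0,-y_0,t_0)$ is exactly $\overline{D^2\phi(P_0)}$, and $\tilde\phi_t = \phi_t$), we again get $F(D^2\phi(P_0)) - \phi_t(P_0) \leq 0$.

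The main obstacle is the boundary case $y_0 = 0$. Here I would first symmetrize the test function: replacing $\phi(x,y,t)$ by $\psi(x,y,t) := \max\{\phi(x,y,t), \phi(x,-y,t)\}$ — or, to keep smoothness, by the average $\tfrac12(\phi(x,y,t)+\phi(x,-y,t))$ after observing that since $u^*$ is itself even in $y$, the even part of $\phi$ still touches $u^*$ from above at $P_0$. So without loss of generality $\phi$ is even in $y$, which forces $\phi_y(P_0) = 0$. Now I claim $\phi$, restricted to $y \geq 0$, is a valid test function touching $u$ from above at the boundary point $P_0 \in Q_1^*$ in the sense appropriate to the Neumann condition: since $\phi_y(P_0) = 0$ we certainly have $\phi_y(P_0) \leq 0$, so the boundary inequality $u_y \leq 0$ does not obstruct the test, and the definition of viscosity subsolution up to the Neumann boundary then yields $F(D^2\phi(P_0)) - \phi_t(P_0) \leq 0$. (One must be slightly careful that the relevant notion of viscosity solution with Neumann data requires that at a boundary touching point \emph{either} the interior inequality \emph{or} the boundary inequality holds; since $\phi_y(P_0)=0$ does not satisfy a strict boundary inequality of the wrong sign, we land in the interior alternative.) Because $\phi$ is even in $y$, its Hessian at $P_0$ has vanishing $(i,n)$ and $(n,i)$ entries for $i < n$, so $D^2\phi(P_0) = \overline{D^2\phi(P_0)}$ and no reflection adjustment is even needed here.

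Finally I would remark that the same three-case analysis applies verbatim to supersolutions if one ever needs it, and note where the structural assumptions are used: convexity of $F$ plays no role in this proposition, but the hypothesis $F_{in} = F_{ni} = 0$ for $i < n$ — equivalently the invariance $F(M) = F(\bar M)$ — is exactly what makes the reflected equation hold, and without it the reflected function would solve a different (reflected) equation. I expect the write-up to be short once the even-symmetrization trick for the test function is in place; the only point demanding care is phrasing the boundary case against whatever precise definition of viscosity solution with Neumann condition the paper adopts.
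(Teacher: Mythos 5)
Your three-way case split and the reflection bookkeeping for $y_0>0$ and $y_0<0$ are fine (modulo one sign slip: for the inequality $F(D^2u)-u_t\leq 0$, which is the supersolution side, the correct test is touching from \emph{below}, not above; the paper indeed tests from below). The genuine problem is the boundary case $y_0=0$, which is exactly where the two approaches diverge.

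Your symmetrization idea produces an even test function with $\phi^{\text{ev}}_y(P_0)=0$, and you argue that because $\phi^{\text{ev}}_y(P_0)\leq 0$ is ``automatic'' the boundary condition ``does not obstruct'' and you therefore ``land in the interior alternative.'' This has the logic backwards. In the min-type Neumann formulation for a supersolution, a tester touching from below at a boundary point only needs to satisfy \emph{one} of the two alternatives; if the boundary inequality $\phi_y\leq 0$ is already satisfied (which it is, since $\phi^{\text{ev}}_y(P_0)=0$), then the condition is verified vacuously and the interior alternative $F(D^2\phi^{\text{ev}})-\phi^{\text{ev}}_t\leq 0$ need not hold. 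The interior inequality would be forced only if the boundary alternative were \emph{violated}, i.e.\ if you had $\phi^{\text{ev}}_y(P_0)>0$, which is precisely what symmetrization rules out. Moreover, the hypothesis only gives the PDE inequality in the open set $Q_1^+$ together with a separate viscosity condition $u_y\leq 0$ on $Q_1^*$; there is no assumed ``up-to-the-boundary'' viscosity supersolution property that you could invoke at $P_0\in Q_1^*$. Finally, even if you could conclude $F(D^2\phi^{\text{ev}}(P_0))-\phi^{\text{ev}}_t(P_0)\leq 0$, this does not transfer to the original $\phi$: convexity gives $F\bigl(\tfrac12(D^2\phi+\overline{D^2\phi})\bigr)\leq \tfrac12\bigl(F(D^2\phi)+F(\overline{D^2\phi})\bigr)=F(D^2\phi)$, which is the wrong direction.

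The paper sidesteps this borderline case entirely by penalizing: it sets $v_\gamma:=u^*-\gamma|y|$ with $\gamma>0$. If a smooth $\phi$ could touch $v_\gamma$ from below at some $P_0\in Q_1^*$, then restricting to $y>0$ gives a tester $\phi+\gamma y$ for $u$ with $\phi_y(P_0)\leq -\gamma<0$, while restricting to $y<0$ and reflecting gives a tester forcing $\phi_y(P_0)\geq\gamma>0$ — a contradiction. So $v_\gamma$ cannot be touched from below on $Q_1^*$ at all, hence is a viscosity supersolution in all of $Q_1$, and letting $\gamma\to 0$ and using stability of viscosity supersolutions under uniform convergence finishes the proof. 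The penalization by $-\gamma|y|$ is the key device you are missing: it turns the degenerate case $\phi_y(P_0)=0$ into a non-existence statement, which is what makes the boundary case tractable.
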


\begin{proof}
We observe that $u^* \in C(Q_1)$  and that $F(D^2u^*)-u^*_t\leq 0$ in $Q_1^+$. Also it can be easily verified $F(D^2u^*)-u^*_t\leq 0$ in $Q_1^-$ (regarding the observation we made above). To get that this is true in $Q_1$ as well it remains to study what happens across $Q^*_1$. To do so we approximate by suitable supersolutions, by considering
$$v_\gamma (X,t) := u^*(X,t) - \gamma |y|$$
for $\gamma >0$. Then we have that $F(D^2v_\gamma )-\left(v_\gamma\right)_t\leq 0$ in $Q_1^+ \cup Q_1^-$ and we will show that $v_\gamma$ cannot be touched by below by any test function at any point of $Q^*_1$. Indeed, let $\phi $ be a test function in $Q_1$ that touches $v_\gamma$ by below at some point $P_0=(x_0,0,t_0) \in Q^*_1$. Our purpose is to use the viscosity Neumann condition to get a contradiction. We have that $\phi (X,t) + \gamma y$ touches $u$ by below at $P_0$ in some $Q_\rho^+(P_0) \subset Q_1^+$. Then $\phi_y(P_0) +\gamma \leq 0$, that is $\phi_y(P_0) \leq -\gamma < 0.$ But on the other hand, $\phi (X,t) - \gamma y$ touches $u^*$ by below at $P_0$ in some $Q_\rho^-(P_0) \subset Q_1^-$. A change of variables implies that $u(X,t) \geq \phi (x',-y,t) + \gamma y$, for $(X,t) \in Q_\rho^+(P_0)$. Then $-\phi_y(P_0) + \gamma \leq 0$, that is $\phi_y(P_0) \geq \gamma > 0$, a contradiction. Therefore such a test function cannot exist. 
\par Consequently, since  $F(D^2v_\gamma )-\left(v_\gamma\right)_t\leq 0$ in $Q_1^+ \cup Q_1^-$ and no test function can touch $v_\gamma $ by below at any point of $Q^*_1$ in a neighborhood in $Q_1$, that is  $F(D^2v_\gamma )-\left(v_\gamma\right)_t\leq 0$  in $Q_1$ in the viscosity sense. Finally, we observe that, $|v_\gamma - u^*| =|\gamma||y| \leq |\gamma| \to 0$ as $\gamma \to 0$, which means that $v_\gamma \to u^*$, as $\gamma \to 0$ uniformly in $Q_1$. So, we can consider for $k \in \mathbb{N}$ the sequence $\{ v_{\frac{1}{k}} \}$ and use the closedness of viscosity supersolutions to complete the proof. 
\end{proof}

Note that an analogous result holds for subsolutions. That is, if $v \in C(Q_1^+ \cup Q^*_1)$ which satisfies $F(D^2v)-v_t\geq 0$ in $Q_1^+$ and $v_y \geq 0$ on $Q^*_1$ in the viscosity sense, then $F(D^2v^*)-v^*_t\geq 0$ in $Q_1$.

\section{Semi-concavity properties}
In this section we obtain bounds for the first and second derivatives of the solution. A first application of these bounds will ensure the existence of $u_{y^+}$ on $Q_1^*$.

\begin{prop} \label{semiconcavity}
For any $0<\delta<1$,
\begin{enumerate}
\item[(A)] $|u_{x_i}|, |u_y| \leq C$, in $Q_{1-\delta}^+$, for any $i=1,\dots,n-1$
\item[(B)] $u_{x_ix_i}, u_t\geq -C$, in $Q_{1-\delta}^+$, for any $i=1,\dots,n-1$
\item[(C)] $u_{yy} \leq C$, in $Q_{1-\delta}^+$
\end{enumerate}
where the constant $C>0$ depends only on $K, n, \lambda , \Lambda, \rho$ and $\delta$.
\end{prop}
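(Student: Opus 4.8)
The plan is to establish the three families of one-sided bounds by the standard translation/incremental-quotient method, working with the reflected function $u^*$ from Proposition \ref{reflection} so that the Neumann condition on $Q_1^*$ disappears and one is left with a pure interior estimate for a fully nonlinear parabolic equation (of the convex type, with the degenerate structure $F_{in}=0$ that makes the reflection legitimate). Throughout, the strategy for a one-sided second-derivative bound on a quantity $w$ (e.g.\ $w=u^*_{x_ix_i}$ for part (B), $w=u^*_t$ for part (B), or $w=u^*_{yy}$ for part (C)) is: first show that a suitable second-order difference quotient of $u^*$ in the corresponding direction is a subsolution (resp.\ supersolution) of a linear uniformly parabolic equation obtained by differentiating $F$, then apply the parabolic maximum principle / interior $L^\infty$ bound against a barrier built from $\|u\|_{L^\infty}$ and $\|\varphi\|_{H^{2+\alpha_0}}$, absorbing the obstacle's contribution. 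The gradient bounds in (A) then follow either from interior Lipschitz estimates for $F(D^2 u^*)-u^*_t\le0$ together with the two-sided control already obtained, or more directly from the interior $H^{1+\alpha}$ (indeed $H^{2+\alpha}$) estimates for convex $F$ quoted from \cite{Wang2}, once $u^*$ is known to be a bona fide viscosity solution across $Q_1^*$ on the non-contact set and a subsolution everywhere.

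Concretely I would proceed in the following order. \emph{Step 1 (tangential translations).} For a direction $e\in\R^{n-1}$ (tangential to $Q_1^*$), the horizontal translate $u^*(\cdot+he,\cdot)$ still solves the same problem on the translated cylinder, and the translation preserves both the reflected structure and the obstacle regularity up to a controlled error governed by $[\varphi]$; hence the first incremental quotients in $x_i$ are bounded, giving $|u_{x_i}|\le C$, and the second incremental quotients in $x_i$ are supersolutions of a linear equation, giving $u_{x_ix_i}\ge -C$ — this handles the $x_i$ parts of (A) and (B). \emph{Step 2 (time).} The time translate $u^*(\cdot,\cdot+h)$ solves $F(D^2\cdot)-\cdot_t=0$ with a translated obstacle; combined with the assumed one-sided bound on $u_t$ from above (part of the problem set-up, equation \eqref{perron_thin_prob}) and a comparison argument using the minimality of $u$ as a supersolution, one gets $u_t\ge -C$, completing (B). \emph{Step 3 (normal direction).} Here one cannot translate in $y$ freely, so instead I would bound $u_{yy}$ from above by exploiting the equation itself: writing $F(D^2u)=u_t$ and using the structure hypothesis $F_{in}=0$ (so the $yy$-entry enters $F$ in a uniformly elliptic, decoupled way) together with the already-established bounds on $u_t$, on the tangential second derivatives $u_{x_ix_j}$, and on the mixed terms, one solves for $u_{yy}$ and reads off the upper bound $u_{yy}\le C$; the Lipschitz bound $|u_y|\le C$ in (A) then follows by integrating $u_{yy}$ from $Q_1^*$, where $u_y\le 0$ in the viscosity sense and $u_y=0$ off the contact set, or again directly from interior Lipschitz estimates.

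The main obstacle is Step 3, the semi-concavity $u_{yy}\le C$ across the thin set: near free boundary points $u_y$ jumps from $0$ (on the non-contact set) to some nonpositive value (on the contact set), so $u_{yy}$ has no classical meaning there and the naive difference-quotient argument in the $y$-direction fails. The resolution is to interpret the bound in the viscosity (one-sided, punctual) sense and to use a sup-convolution-type regularization of $u^*$ in the $y$-variable combined with the reflection: because $u^*$ is even in $y$ and a subsolution across $Q_1^*$, the function $y\mapsto u^*(x,y,t)$ is controlled from above by a parabola whose opening is dictated by $\|u\|_{L^\infty}$ and the obstacle data, which is exactly the content of $u_{yy}\le C$. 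A secondary technical point is ensuring that all the translated/regularized functions remain admissible competitors in the Perron family defining $u$, so that the comparison principle can be invoked; this is where the assumptions $u_0>\varphi$ on $\partial_p Q_1^*$ and $\varphi\in H^{2+\alpha_0}$ are used to keep the errors universal.
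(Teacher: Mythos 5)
Your proposal has some genuine gaps, though part (C) is essentially the paper's argument.

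The most serious problem is in part (A). You write that the gradient bounds would follow from interior Lipschitz or $H^{1+\alpha}$/$H^{2+\alpha}$ estimates ``once $u^*$ is known to be a bona fide viscosity solution across $Q_1^*$ on the non-contact set and a subsolution everywhere.'' But $u^*$ is only a \emph{supersolution} across $Q_1^*$ (Proposition \ref{reflection}); on the contact set it is not a subsolution, and interior Schauder-type estimates for convex $F$ require a genuine two-sided solution. So no interior regularity theory for the equation alone gives (A). The paper's key idea, which your proposal never reaches, is to \emph{thicken the obstacle}: extend $\varphi$ into $Q_1^\pm$ as solutions of Dirichlet problems, show $\tilde\varphi$ is Lipschitz, and observe that $u^*$ then solves a classical (thick) obstacle problem in $Q_1$ with a Lipschitz obstacle, for which $H^1$-regularity is known. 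That single step delivers both $|u_{x_i}|\le C$ and $|u_y|\le C$ at once. Relatedly, your suggestion to recover $|u_y|\le C$ ``by integrating $u_{yy}$ from $Q_1^*$'' is circular: before (A) is proved, $u_y$ on $Q_1^*$ is not known to be bounded below, and the viscosity condition only gives $u_y\le 0$.

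There is a second gap in part (B). Your translation/incremental-quotient strategy is the right spirit — the paper does construct competitor supersolutions $\frac{u(x+he_i,y,t)+u(x-he_i,y,t)}{2}+Ch^2$ and $u(x,y,t-h)+Ch$ — but a competitor is only useful if you can verify it dominates $u$ on the \emph{parabolic boundary} of the smaller cylinder. The paper's crucial preliminary step is to work on an annulus $\tilde Q^+$ adjacent to $\partial_p Q_1^+$ where $u_y=0$ in the viscosity sense (guaranteed by the assumption $u_0>\varphi$ on $\partial_p Q_1^*$ and the resulting buffer zone $Q_1^*\setminus Q_{1-\rho}^*$); there one can invoke up-to-boundary $H^{2+\alpha}$ estimates from \cite{CM19} to get uniform bounds on the incremental quotients on $\partial_p Q^+_{1-d/2}\setminus Q^*_{1-d/2}$. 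Without this boundary control, the comparison against the competitor cannot close. Your proposal does not explain where the boundary comparison comes from; ``translated obstacle'' and ``errors governed by $[\varphi]$'' address only the thin part of the boundary, not the lateral part. Finally, the sup-convolution regularization you sketch for (C) is not needed: once (B) is in hand, the paper's linearization $a_{ij}(X,t)=\int_0^1 F_{ij}(hD^2u)\,dh$ with $a_{in}=a_{ni}=0$ lets you solve the equation for $u_{yy}$ pointwise and read off the upper bound directly, which is exactly the algebraic step you describe and is correct.
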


Note that since $F$ is convex, we have that $u_{x_ix_j}$ and $u_t$ exist in $Q_1^+$ in the classical sense by interior estimates (see \cite{Wang2}) .

\begin{proof}
$\ $

For \textit{(A)}, we thicken the obstacle $\varphi$. First, we extend $\varphi$ as a solution inside $Q_1^+$ and $Q_1^-$ (following the idea of Theorem 1(a) in \cite{ac}, see also Proposition 2.1 in \cite{Fern16}), that is we consider the viscosity solutions of the Dirichlet problems
\begin{align*} 
\begin{cases}
F(D^2\tilde{\varphi}) -\tilde{\varphi}_t =0, &\ \text{ in } \ Q_1^+ \\
\tilde{\varphi}=\varphi , &\ \text{ on } \ Q_1^* \\
\tilde{\varphi}=-||u||_{L^\infty\left(Q_1^+\right)}, &\ \text{ on } \ \partial_pQ_1^+ \setminus Q_1^*
\end{cases} \ \ \text{ and } \ \
\begin{cases} 
F(D^2\tilde{\varphi}) -\tilde{\varphi}_t =0, &\ \text{ in } \ Q_1^- \\
\tilde{\varphi}=\varphi , &\ \text{ on } \ Q_1^* \\
\tilde{\varphi}=-||u||_{L^\infty\left(Q_1^+\right)}, &\ \text{ on } \ \partial_pQ_1^- \setminus Q_1^*.
\end{cases}
\end{align*}
For any $0<\delta<1$ and since $\varphi$ is smooth enough we obtain, using Theorem 12 of \cite{CM19}, that $\tilde{\varphi}$ is Lipschitz in $Q_{1-\frac{\delta}{2}}$ with a constant that depends only on $K, n, \lambda , \Lambda$ and $\delta$. Moreover using maximum principle we can obtain that $u^* \geq \tilde{\varphi}$ in $Q_1$, where $u^*$ denotes the even reflection of $u$ in $y$ inside $Q_1$. Finally, Proposition \ref{reflection} ensures that $F(D^2u^*) -u^*_t \leq 0$ in  $Q_1$ and that $F(D^2u^*) -u^*_t =0$ in  $Q_1 \cap \{u^*>\tilde{\varphi}\}$ in the viscosity sense. Therefore $u^*$ satisfies a thick obstacle problem in $Q_1$ with obstacle $\tilde{\varphi}$ which is Lipschitz in $Q_{1-\frac{\delta}{2}}$. In particular, we get that $u^* \in H^1 \left(Q_{1-\delta}\right)$ with a constant that depends only on $K, n, \lambda , \Lambda$ and $\delta$ (see \cite{Sh}, \cite{PS07}) which gives \textit{(A)}.

For \textit{(B)}, we denote by $d:=\min \{\rho,\delta\}$ and we consider the set $\tilde{Q}^+:= Q_{1-\frac{d}{3}}^+ \setminus Q_{1-\frac{2d}{3}}^+$. We observe that $u_y=0$ on $\tilde{Q}^*$ in the viscosity sense, since $\tilde{Q}^* \subset Q_1^* \setminus Q_{1-\rho}^*$. Thus up to the boundary $H^{2+\alpha}$-estimates (see Theorem 23 in \cite{CM19}) can be applied in $\tilde{Q}^+$ and we get $H^{\alpha}$-estimates for $u_{x_ix_i}$ and $u_t$ on $\partial_pQ^+_{1-\frac{d}{2}} \setminus Q^*_{1-\frac{d}{2}}$. In particular we have uniform bounds for the corresponding difference quotients, that is,
\begin{equation} \label{u_xx_bound}
\frac{u(x+he_i,y,t)+u(x-he_i,y,t)-2u(x,y,t)}{h^2}\geq -C
\end{equation}
where $\{e_i\}_{1\leq i \leq n}$ is the normal basis of $\R^n$ and
\begin{equation} \label{u_t_bound}
\frac{u(x,y,t-h)-u(x,y,t)}{h}\geq -C
\end{equation}
for $(X,t) \in \partial_pQ^+_{1-\frac{d}{2}}\setminus Q^*_{1-\frac{d}{2}}$, $h>0$ small enough (depending only on $d$) and $C>0$ depends only on $K, n, \lambda , \Lambda, \rho$ and $\delta$.

We study (\ref{u_xx_bound}) first in order to bound $u_{x_ix_i}$, for $i=1,\dots,n-1$. We observe that 
$$v(x,y,t):=\frac{u(x+he_i,y,t)+u(x-he_i,y,t)}{2}+Ch^2\geq u(x,y,t), \ \ \text{ on } \ \partial_pQ^+_{1-\frac{d}{2}}\setminus Q^*_{1-\frac{d}{2}}.$$
Moreover, for $(x,t) \in Q^*_{1-\frac{d}{2}}$,
\begin{align*}
v(x,0,t)&=\frac{u(x+he_i,0,t)+u(x-he_i,0,t)}{2}+Ch^2 \\
&\geq \frac{\varphi(x+he_i,t)+\varphi(x-he_i,t)}{2}+Ch^2 \geq \varphi (x,t)
\end{align*}
changing $C$ if necessary depending on $K$. We observe also that the convexity of $F$ ensures that $F(D^2v)-v_t \leq 0$ in $Q^+_{1-\frac{d}{2}}$ in the viscosity sense. Finally note that $v_y \leq 0$ on $Q^*_{1-\frac{d}{2}}$ in the viscosity sense (which can be obtained as Proposition 11 in \cite{CM19}). That is $v$ is a viscosity supersolution of (\ref{perron_thin_prob}) in $Q^+_{1-\frac{d}{2}}$, thus $v \geq u$ in $Q^+_{1-\frac{d}{2}}$. Therefore
$$\frac{u(x+he_i,y,t)+u(x-he_i,y,t)-2u(x,y,t)}{h^2}\geq -C$$
in $Q^+_{1-\frac{d}{2}}$ and $C>0$ depends only on $K, n, \lambda , \Lambda, \rho$ and $\delta$. Next we study (\ref{u_t_bound}) in a similar way in order to bound $u_{t}$. Observe that 
$$w(x,y,t):=u(x,y,t-h)+Ch\geq u(x,y,t), \ \ \text{ on } \ \partial_pQ^+_{1-\frac{d}{2}} \setminus Q^*_{1-\frac{d}{2}}.$$
Moreover, for $(x,t) \in Q^*_{1-\frac{d}{2}}$,
\begin{align*}
w(x,0,t)&=u(x,0,t-h)+Ch \geq \varphi(x,t-h)+Ch \geq \varphi (x,t)
\end{align*}
changing $C$ if necessary depending on $K$. Finally, note that $F(D^2w)-w_t =0$ in $Q^+_{1-\frac{d}{2}}$ and $w_y \leq 0$ on $Q^*_{1-\frac{d}{2}}$. That is $w$ is a viscosity supersolution of (\ref{perron_thin_prob}) in $Q^+_{1-\frac{d}{2}}$, thus $w \geq u$ in $Q^+_{1-\frac{d}{2}}$. Therefore
$$\frac{u(x,y,t-h)-u(x,y,t)}{h}\geq -C$$
in $Q^+_{1-\frac{d}{2}}$ and $C>0$ depends only on $K, n, \lambda , \Lambda, \rho$ and $\delta$.

For \textit{(C)}, we will use \textit{(B)} and the equation. Define
$$a_{ij}(X,t):=\int_0^1 F_{ij}\left( hD^2u(X,t)\right) \ dh$$
and we observe that $\frac{d}{dh} \left[ F \left( hD^2u(X,t)\right) \right]=\sum_{i,j=1}^n F_{ij}\left( hD^2u(X,t)\right) \ u_{x_ix_j}(X,t)$. That is,
\begin{align*}
\sum_{i,j=1}^n a_{ij}(X,t) \ u_{x_ix_j}(X,t)&=\int_0^1 \sum_{i,j=1}^n F_{ij}\left( hD^2u(X,t)\right) \ u_{x_ix_j}(X,t) \ dh =F \left( D^2u(X,t)\right).
\end{align*}
Thus, $\sum_{i,j=1}^n a_{ij}(X,t) \ u_{x_ix_j}(X,t)-u_t(X,t)=0$ in $Q_1^+$. Also, we have that $a_{ij}=a_{ji}$ and that $a_{in}=a_{ni}=0$, for any $1\leq i \leq n-1$, from our assumptions on $F$. Additionally we may observe that using the ellipticity of $F$ we have that for any $M \in S_n$ and $h>0$
$$\lambda h \leq F \left(M+H^{ii}_h\right) -F(M) \leq \Lambda h$$
so taking $h \to 0^+$ we have that $\lambda  \leq F_{ii}(M) \leq \Lambda$. In particular, $\lambda  \leq a_{ii}(X,t) \leq \Lambda$, for any $(X,t) \in Q_1^+$, $i=1,\dots ,n$. So if $A_{n-1}(X,t) := \left( a_{ij}(X,t) \right)_{i,j=1,\dots,n-1} \in S_{n-1}$ we have
\begin{align*}
a_{nn}(X,t) u_{yy}(X,t) &= -\sum_{i,j=1}^{n-1} a_{ij}(X,t) \ u_{x_ix_j}(X,t)+u_t(X,t)\\
&=- \text{tr}  \left( A_{n-1}(X,t) \ D^2_{n-1}u(X,t) \right) +u_t(X,t) \\
&=-\text{tr}  \left[ A_{n-1}(X,t)\left( \ D^2_{n-1}u(X,t)+C\text{I}_{n-1} \right) \right] + \text{tr}  \left(C A_{n-1}(X,t)\right) +u_t(X,t)
\end{align*}
where $C>0$ is the constant in \textit{(B)}, thus 
$$\text{tr}  \left[ A_{n-1}(X,t)\left( \ D^2_{n-1}u(X,t)+C\text{I}_{n-1} \right) \right]=\text{tr}  \left[ A_{n-1}(X,t)\right] \text{tr}\left[ \ D^2_{n-1}u(X,t)+C\text{I}_{n-1} \right]\geq 0$$
and $\text{tr}  \left( C A_{n-1}(X,t)\right) = C \sum_{i=1}^{n-1}a_{ii}(X,t)\leq C \Lambda (n-1)$, $a_{nn}(X,t)\geq \lambda$. Hence we have that
$$u_{yy}\leq C \ \frac{\Lambda (n-1)+1}{\lambda}, \ \text{ in } \ Q_{1-\delta}^+.$$
\end{proof}

For any $(x,t) \in Q_1^*$ we define
$$\sigma(x,t):=\lim_{y \to 0^+} u_y(x,y,t).$$
Note that Proposition \ref{semiconcavity} ensures the existence of the above limit for any $(x,t) \in Q_1^*$. Indeed, we consider the function $v(X,t)=u_y(X,t)-Cy$, for $(X,t) \in Q_1^+$. Then using \textit{(A)} and \textit{(C)} of Proposition \ref{semiconcavity} we obtain that $v>-
2C$ and $v_y=u_{yy}-C\leq 0$ in $Q_{1-\delta}^+$, that is, $v$ is monotone decreasing in $y$ and bounded by below, thus $\lim_{y \to 0^+} v(x,y,t)$ exists for $(x,t) \in Q_{1-\delta}^*$, for any $0<\delta<1$. 

Furthermore we remark that the existence of the above limit ensures the existence of 
\\$\lim_{y \to 0^+} \frac{u(x,y,t)-u(x,0,t)}{y}$, that is $u_{y^+}$ exists on $Q_1^*$ and equals to $\sigma$ (note also that $u_{y}$ is continuous in $y$ up to $Q_1^*$). Thereafter the viscosity condition $u_y \leq 0$ on $Q_1^*$ suggests that one should have 
\begin{equation} \label{sigmasign1}
\sigma \leq 0, \ \text{ on } \ Q_1^*.
\end{equation}
Although we know that $u_{y^+}=\sigma$ on $Q_1^*$ in the classical sense, we cannot use the viscosity condition to get (\ref{sigmasign1}) since we do not know if $u_{y^+}$ is continuous in $(x,0,t)$. To obtain (\ref{sigmasign1}) we use a penalization technique introduced in the next section.

\section{A penalized problem}

We focus now on showing (\ref{sigmasign1}) by approximating $u$ by suitable classical solutions. So for any $k \in \N$ we consider the penalized problem
\begin{align} \label{pen_prob}
\begin{cases} 
F\left( D^2u^{(k)}\right)-\left(u^{(k)}\right)_t=0, &\ \ \ \ \text{ in } \ \ Q_1^+ \\
\left(u^{(k)}\right)_y =-k \left( \varphi - u^{(k)}\right)^+:=g^{(k)}, &\ \ \ \ \text{ on } \ \ Q_1^* \\
u^{(k)}=u_0,&\ \ \ \ \text{ on } \ \ \partial_p Q_1^+ \setminus Q_1^*.
\end{cases}
\end{align}

Note that (\ref{pen_prob}) is not a free boundary problem. Using ABP-estimate and a barrier argument we obtain estimates for $u^{(k)}$ and $g^{(k)}$ (Lemmata \ref{u^k_est} and \ref{g^k_est}) which are independent of $k$. Then we will be able to treat (\ref{pen_prob}) as a non-homogeneous Neumann problem and, using suitable H\"older estimates, we obtain the uniform convergence of $u^{(k)}$ to $u$ (Proposition \ref{u^k_convergence}) and the existence of $\left(u^{(k)}\right)_y$ in the classical sense (Lemma \ref{u^k_regularity}). This last property means that the viscosity condition for $u^{(k)}$ holds in the classical sense. This makes the penalized problem very useful in proving (\ref{sigmasign1}) (see Lemma \ref{sigmasign} below). Note also that for any $k \in \N$, we have that $u^{(k)}>\varphi$ on $Q_1^* \setminus Q_{1-\rho}^*\ $ by  comparing $u^{(k)}$ with the solution $v$ of (\ref{rho_def}) (see Theorem 5 in \cite{CM19}).

\begin{lemma}[Independent of $k$ estimate for $u^{(k)}$]  \label{u^k_est} 
For any $k \in \mathbb{N}$,
\begin{equation}\label{u^k_est1}
||u^{(k)}||_{L^\infty\left(Q_1^+\right)} \leq \max \{ ||u||_{L^\infty\left(\partial_p Q_1^+ \setminus Q_1^*\right)}, \ ||\varphi||_{L^\infty\left(Q_1^*\right)} \}.
\end{equation}
\end{lemma}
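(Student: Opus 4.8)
The plan is to prove the $L^\infty$ bound for $u^{(k)}$ by an ABP-type maximum principle argument, treating $M:=\max\{\|u\|_{L^\infty(\partial_p Q_1^+\setminus Q_1^*)},\|\varphi\|_{L^\infty(Q_1^*)}\}$ as the target bound and showing that $u^{(k)}$ cannot exceed $M$ nor drop below $-M$. First I would establish the upper bound $u^{(k)}\le M$. The key observation is that the penalization term $g^{(k)}=-k(\varphi-u^{(k)})^+$ carries a favorable sign: wherever $u^{(k)}$ is near its positive maximum over $\overline{Q}_1^+$, if that maximum occurs on $Q_1^*$ we have $u^{(k)}\ge M\ge \varphi$ there, hence $(\varphi-u^{(k)})^+=0$, so $(u^{(k)})_y=0$ at such a point; combined with $F(D^2u^{(k)})-(u^{(k)})_t=0$ in $Q_1^+$, the interior maximum principle for fully nonlinear uniformly parabolic equations (recall $F(O)=0$, so constants are solutions and the comparison principle applies) forces the maximum onto $\partial_p Q_1^+\setminus Q_1^*$, where $u^{(k)}=u_0$ and $|u_0|\le M$. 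More precisely, I would argue by contradiction: suppose $\sup_{\overline{Q}_1^+}u^{(k)}=:m>M$; the sup is attained (by continuity and compactness, after noting $u^{(k)}\in C(\overline{Q}_1^+)$) at some parabolic-boundary point of $Q_1^+$; it cannot be on $\partial_p Q_1^+\setminus Q_1^*$ since there $u^{(k)}=u_0\le M<m$; so it is attained at some $P_0=(x_0,0,t_0)\in Q_1^*$ (using that the interior parabolic maximum principle rules out interior maxima for the homogeneous equation), but there $u^{(k)}(P_0)=m>M\ge\varphi(P_0)$, so $g^{(k)}(P_0)=0$ and the Hopf-type / viscosity Neumann condition $(u^{(k)})_y(P_0)=0$ together with the strong maximum principle gives a contradiction.

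For the lower bound $u^{(k)}\ge -M$, the penalization term now works in our favor directly. Set $w:=u^{(k)}+M$ and suppose $w$ attains a negative minimum. On $\partial_p Q_1^+\setminus Q_1^*$ we have $w=u_0+M\ge 0$. If the negative minimum of $w$ is attained at an interior point of $Q_1^+$, the interior minimum principle for $F(D^2\cdot)-(\cdot)_t=0$ is violated. So it must be attained at some $P_0\in Q_1^*$; there $u^{(k)}(P_0)=w(P_0)-M<-M\le\varphi(P_0)$, so $(\varphi-u^{(k)})^+(P_0)=\varphi(P_0)-u^{(k)}(P_0)>0$ and $(u^{(k)})_y(P_0)=-k(\varphi(P_0)-u^{(k)}(P_0))<0$. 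But at a boundary minimum on $\{y=0\}$ of $Q_1^+$ (which lies on the lower face $y=0$, with the domain on the side $y>0$), the outward normal is $-e_n$ and the Hopf lemma (or the viscosity interpretation: a suitable test paraboloid touches from below) forces $(u^{(k)})_y(P_0)\ge 0$ unless $w$ is constant — again a contradiction. The only remaining possibility is $w\ge 0$, i.e. $u^{(k)}\ge -M$.

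The main obstacle I anticipate is making the boundary-point arguments rigorous at the level of viscosity solutions rather than classical ones: the Neumann condition $(u^{(k)})_y=g^{(k)}$ holds a priori only in the viscosity sense, and one must carefully phrase "touching by a test function from above/below at $P_0\in Q_1^*$" so that the sign of $g^{(k)}(P_0)$ contradicts the sign dictated by the maximum/minimum. I would handle this exactly as in the proof of Proposition \ref{reflection}: if $\phi$ touches $u^{(k)}$ from above at $P_0$ with $u^{(k)}(P_0)=m>M\ge\varphi(P_0)$ then the viscosity Neumann inequality gives $\phi_y(P_0)\le g^{(k)}(P_0)=0$, while a strict-maximum perturbation (replacing $\phi$ by $\phi+\eta|X-X_0|^2-\eta t$ for small $\eta$, or invoking the strong maximum principle for the interior equation to propagate the maximum to the boundary) yields $\phi_y(P_0)>0$; symmetrically for the minimum. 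Alternatively — and more cleanly — one can avoid Hopf-type arguments entirely by invoking the ABP-type estimate of Theorem 5 in \cite{CM19} directly, which is in fact the route the statement's surrounding text ("Using ABP-estimate and a barrier argument") suggests: compare $u^{(k)}$ with the constant $M$ as a supersolution and with $-M$ as a subsolution of the Neumann problem, the sign of $g^{(k)}$ ensuring the comparison goes through on $Q_1^*$, so that no interior positivity set for $(u^{(k)}-M)^+$ or $(-M-u^{(k)})^+$ can survive. Either way the estimate \eqref{u^k_est1} follows, and it is manifestly independent of $k$ since $M$ does not involve $k$.
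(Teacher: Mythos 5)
Your proposal is correct and follows essentially the same strategy as the paper: use the parabolic maximum principle to push the extremum of $u^{(k)}$ to $\overline{Q}_1^*$, then invoke Hopf's lemma together with the sign and structure of $g^{(k)}=-k(\varphi-u^{(k)})^+$ to derive the bound. The paper handles the lower bound slightly more economically by quoting Theorem 5 of \cite{CM19} (the ABP-type estimate for a one-sided Neumann condition), which applies directly because $g^{(k)}\leq 0$ always, giving $\inf_{Q_1^+}u^{(k)}\geq\inf_{\partial_pQ_1^+\setminus Q_1^*}u$ with no case analysis; your full Hopf argument at a boundary minimum achieves the same thing, and you correctly note the ABP route as an alternative. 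For the upper bound, the paper's phrasing is the direct contrapositive of yours (Hopf gives $u^{(k)}_y<0$ in the viscosity sense at a boundary maximum, hence $g^{(k)}<0$ there, hence $\sup u^{(k)}<\varphi\leq\|\varphi\|_\infty$), but the content is identical.

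One small caveat about your closing ``alternatively --- and more cleanly'' remark: the constant $M$ is \emph{not} a supersolution of the Neumann problem with data $g^{(k)}$, since that would require $0=(M)_y\leq g^{(k)}$ on $Q_1^*$, and $g^{(k)}\leq 0$ points the wrong way. So a direct comparison $u^{(k)}\leq M$ via the ABP/comparison principle does not go through as stated; one still needs the extra observation that $g^{(k)}$ vanishes on $Q_1^*\cap\{u^{(k)}>M\}$ (since there $u^{(k)}>M\geq\varphi$), which is precisely the structural fact that the Hopf argument exploits. The lower bound, by contrast, really does reduce to a clean comparison because $-M$ \emph{is} a subsolution ($0\geq g^{(k)}$ on $Q_1^*$), which is why the paper can dispose of it in one line via the ABP estimate.
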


\begin{proof}
First, by Theorem 5 of \cite{CM19} we have
$$\inf_{Q_1^+}u^{(k)}\geq \inf_{\partial_p Q_1^+ \setminus Q_1^*} u^{(k)}=\inf_{\partial_p Q_1^+ \setminus Q_1^*} u$$
since $\left(u^{(k)}\right)_y \leq 0$ in the viscosity sense on $Q_1^*$. Hence it remains to bound $\sup_{Q_1^+} u^{(k)}$.

Assume that 
$$\sup_{Q_1^+}u^{(k)}>\sup_{\partial_p Q_1^+ \setminus Q_1^*} u$$
and let $(X_0,t_0) \in \overline{Q}_1^+$ be such that $u^{(k)}(X_0,t_0)=\sup_{Q_1^+}u^{(k)}=:M$. From maximum principle (see \cite{Wang1}, Corollary 3.20) we know that $||u^{(k)}||_{L^\infty\left(\overline{Q}_1^+\right)} \leq||u^{(k)}||_{L^\infty\left(\partial_p Q_1^+\right)}$, thus we can choose $(X_0,t_0)=(x_0,0,t_0) \in Q_1^*$. Then by Hopf's lemma we obtain that $u^{(k)}_y(x_0,t_0)<0$ in the viscosity sense. Therefore $-k\left(\varphi(x_0,t_0)-u^{(k)}(x_0,0,t_0) \right)<0$, that is $M=u^{(k)}(x_0,0,t_0)<\varphi(x_0,t_0) \leq ||\varphi||_{L^\infty\left(Q_1^*\right)}$.
\end{proof} 

\begin{lemma}[Independent of $k$ estimate for $g^{(k)}$]  \label{g^k_est} 
For any $k \in \mathbb{N}$,
\begin{equation}\label{g^k_est1}
||g^{(k)}||_{L^\infty\left(Q_1^*\right)} \leq C \left( K, n, \lambda , \Lambda,\rho \right).
\end{equation}
\end{lemma}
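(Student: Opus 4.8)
The plan is to exploit the fact that $g^{(k)} = \left(u^{(k)}\right)_y$ on $Q_1^*$ and that $g^{(k)} \le 0$, so that it suffices to produce a lower bound $\left(u^{(k)}\right)_y \ge -C$ on $Q_1^*$ with $C$ universal. The natural way to get such a one-sided gradient bound is a barrier argument from above: at a point $(x_0,t_0) \in Q_1^*$ where $g^{(k)}$ is large (in absolute value), $u^{(k)}$ lies strictly below $\varphi$, so we compare $u^{(k)}$ on a small half-cylinder $Q_r^+(x_0,0,t_0)$ with an explicit supersolution that agrees with (or dominates) $u^{(k)}$ on the lateral and bottom parabolic boundary and whose normal derivative at $(x_0,0,t_0)$ is controlled. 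The construction of the comparison function should use the uniform $L^\infty$ bound for $u^{(k)}$ from Lemma \ref{u^k_est}, the regularity $\varphi \in H^{2+\alpha_0}(Q_1^*)$ (so $\varphi$, $D\varphi$, $\varphi_t$, $D^2\varphi$ are all bounded by $K$), and the fact that on $Q_1^* \setminus Q_{1-\rho}^*$ we already know $u^{(k)} > \varphi$, so $g^{(k)} = 0$ there; hence the estimate is only needed on the compactly contained set $\overline{Q_{1-\rho}^*}$, which is where the barriers live.

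Concretely, I would first reduce to the case $g^{(k)}(x_0,t_0) < 0$ at an interior thin point $(x_0,t_0)$, so $u^{(k)}(x_0,0,t_0) < \varphi(x_0,t_0)$ and $g^{(k)}(x_0,t_0) = -k(\varphi(x_0,t_0)-u^{(k)}(x_0,0,t_0))$. The idea is that $k(\varphi-u^{(k)})^+$ cannot be too large, because otherwise $\left(u^{(k)}\right)_y$ would be very negative, forcing $u^{(k)}$ to drop well below $\varphi$ as $y$ increases a little, which together with the equation and the boundary data violates the maximum principle / ABP estimate. I would build a barrier of the form $w(X,t) = \varphi(x_0,t_0) + B_0 \cdot (x - x_0) + \tfrac12(x-x_0)^\top D^2\varphi(x_0,t_0)(x-x_0) + \varphi_t(x_0,t_0)(t-t_0) - \mu y + C_1 y^2 + C_2(|x-x_0|^2 + |t-t_0|)$ on a fixed-size half-cylinder $Q_r^+(x_0,0,t_0)$ with $r$ universal (small enough that $Q_r^* \subset Q_1^*$), choosing $C_1, C_2$ universal so that $F(D^2w) - w_t \le 0$ (using $F(O)=0$, ellipticity, and boundedness of $D^2\varphi, \varphi_t$), choosing the constant so that $w \ge u^{(k)}$ on the parabolic boundary $\partial_p Q_r^+(x_0,0,t_0) \setminus Q_r^*$ (possible since $u^{(k)}$ is bounded by $K$ and $\varphi$ is $C^{1}$-close to its second-order Taylor polynomial on scale $r$), and arranging $w \ge \varphi$ on $Q_r^*$; then the viscosity comparison principle for (\ref{pen_prob}) — or rather for the supersolution property (\ref{perron_thin_prob}) satisfied by $w$ — gives $w \ge u^{(k)}$ in $Q_r^+(x_0,0,t_0)$. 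Since $w(x_0,0,t_0) = \varphi(x_0,t_0) > u^{(k)}(x_0,0,t_0)$ is not immediately an equality, I would instead subtract the appropriate constant or, more cleanly, use that $w_y(x_0,0,t_0) = -\mu$ and compare $w - u^{(k)} \ge 0$ with equality failing; the correct normalization is to pick $\mu$ so that $w$ touches $u^{(k)}$ from above at $(x_0,0,t_0)$, then the viscosity Neumann inequality $w_y(x_0,0,t_0) \le 0$ is automatic and instead one reads off $\left(u^{(k)}\right)_y(x_0,0,t_0) \ge w_y(x_0,0,t_0) = -\mu$, where $\mu$ is forced to be universal by the requirement that $w \ge u^{(k)}$ on the top of the half-cylinder.

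The main obstacle is the bookkeeping in the barrier: one must choose $\mu$ as small as possible subject to $w$ dominating $u^{(k)}$ on the curved part $\{y = r\} \cap Q_r^+$ and on the lateral boundary $\{|x-x_0|^2 + |t-t_0| = r^2\}$, and show that the minimal such $\mu$ is bounded by a universal constant — this uses crucially that $u^{(k)} \le \|u^{(k)}\|_\infty \le K$ from Lemma \ref{u^k_est} and that $w \ge \varphi(x_0,t_0) - CK - \mu r + C_1 r^2 \ge K$ on the top if $\mu \le (\text{something universal})$, i.e. the gap $u^{(k)}(x_0,0,t_0) < \varphi(x_0,t_0)$ only makes the comparison easier. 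I would also need to double-check that $w$ genuinely satisfies $w_y \le 0$ on $Q_r^*$ in the viscosity sense (so it is a legitimate supersolution of the thin problem, not just of the interior equation), which follows since $w_y = -\mu + 2C_1 y$ is negative at $y = 0^+$ for $\mu$ positive, and that the comparison principle applies in this half-cylinder setting — both are routine given the reflection principle (Proposition \ref{reflection}) and the ABP-type estimate (Theorem 5 in \cite{CM19}) already invoked earlier. Once $\left(u^{(k)}\right)_y \ge -\mu$ with $\mu$ universal is established pointwise on $Q_{1-\rho}^*$, combined with $g^{(k)} \le 0$ everywhere and $g^{(k)} = 0$ on $Q_1^* \setminus Q_{1-\rho}^*$, we conclude $\|g^{(k)}\|_{L^\infty(Q_1^*)} \le \mu = C(K,n,\lambda,\Lambda,\rho)$, which is (\ref{g^k_est1}).
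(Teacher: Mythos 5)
There is a genuine gap, and the central issue is the direction of the touching. To obtain a \emph{lower} bound on $g^{(k)}(x_0,t_0) = (u^{(k)})_y(x_0,0,t_0)$ from the viscosity Neumann condition, you need a test function that touches $u^{(k)}$ from \emph{below} at $(x_0,0,t_0)$: if $\Phi \le u^{(k)}$ near that point with equality there, then the one-sided $y$-derivative of $u^{(k)}-\Phi$ at $y=0^+$ is nonnegative, giving $(u^{(k)})_y(x_0,0,t_0) \ge \Phi_y(x_0,0,t_0)$. Your barrier $w$ is arranged so that $w \ge u^{(k)}$ in the half-cylinder, i.e.\ it touches from \emph{above}; the inequality this yields is $(u^{(k)})_y(x_0,0,t_0) \le w_y(x_0,0,t_0) = -\mu$, an upper bound that is vacuous because $g^{(k)} \le 0$ is already known. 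The line near the end of your write-up, where you assert that touching from above lets you ``read off $(u^{(k)})_y \ge w_y$,'' has the inequality reversed.

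Flipping the construction to a test function lying below $u^{(k)}$ brings out the second missing idea. To apply the maximum principle in a half-cylinder $Q_\rho^+(x_0,t_0)$, you must dominate on the \emph{thin} part $Q_\rho^*(x_0,t_0)$ as well, where you have no a priori control of $u^{(k)}$: arranging $\Phi \le \varphi$ is not enough, because $u^{(k)}$ may be either above or below $\varphi$ there, and the Neumann-type comparison you would otherwise invoke requires knowing $g^{(k)} \ge w_y$, which is circular. The paper resolves both problems at once by taking $(x_0,t_0)$ to be the point where $g^{(k)}$ attains its minimum over $\overline{Q}_1^*$ (necessarily in $Q_{1-\rho}^*$), so that $(\varphi - u^{(k)})^+$ is maximal there, and setting $\Phi(X,t) = u^{(k)}(x_0,0,t_0) - \varphi(x_0,t_0) + \varphi(x,t) + b$, where $b \le 0$ solves an auxiliary Dirichlet problem for a Pucci extremal operator (rather than being an explicit polynomial), vanishes at the center, equals $\inf_{Q_1^+}u - \sup_{Q_1^*}\varphi$ on the lateral boundary, and satisfies $|Db(0,0)| \le C$ by interior regularity. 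The extremality gives exactly $\varphi(x,t) - u^{(k)}(x,0,t) \le \varphi(x_0,t_0) - u^{(k)}(x_0,0,t_0)$, which makes $\Phi \le u^{(k)}$ on the thin boundary; on the lateral part, the value of $b$ does the job. Since $u^{(k)} - \Phi \in \overline{S}_p(\lambda/n,\Lambda)$, the maximum principle yields $\Phi \le u^{(k)}$, the touching is from below, and $g^{(k)}(x_0,t_0) \ge \Phi_y(x_0,0,t_0) = b_y(0,0) \ge -C$. Without the extremal-point choice, neither the thin-boundary comparison nor the universality of $\mu$ goes through.
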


\begin{proof}
Note that $g^{(k)} \leq 0$ on $Q_1^*$, so we need to obtain only a lower bound. Let $(x_0,t_0) \in \overline{Q}_1^*$ be such that $g^{(k)}(x_0,t_0)= \min_{\overline{Q}_1^*} g^{(k)}$ and we may assume that $g^{(k)}(x_0,t_0)<0$. Recall also that $u^{(k)}>\varphi$ on $Q_1^* \setminus Q_{1-\rho}^*$ which implies that $g^{(k)}=0$ on $Q_1^* \setminus Q_{1-\rho}^*$, that is, $(x_0,t_0) \in Q_{1-\rho}^*$.

We intend to turn the obstacle $\varphi$ into a suitable test function that touches $u^{(k)}$ by below at $(x_0,t_0)$ and then to use the viscosity condition $\left(u^{(k)}\right)_y=g^{(k)}$ to bound $g^{(k)}(x_0,t_0)$. We denote by
$M:=\inf_{Q_1^+}u-\sup_{Q_1^*}\varphi$ and observe that $M\leq 0$, indeed 
$$\inf_{Q_1^+}u \leq \inf_{Q_1^*}u \leq u(x^*,0,t^*)=\varphi(x^*,t^*) \leq \sup_{Q_1^*}\varphi$$
where $(x^*,t^*)$ is any point of $\Delta^*$. Keep also in mind that by Lemma \ref{u^k_est}, $M \leq \inf_{Q_1^+}u^{(k)}-\sup_{Q_1^*}\varphi$. We consider $b$ to be the solution of the following Dirichlet boundary value problem
\begin{align*}
\begin{cases}
\mathcal{M}^-\left(D^2b, \frac{\lambda}{n}, \Lambda \right) -b_t= (\Lambda n+1)\ ||\varphi||_{H^{2+\alpha}\left(Q_1^*\right)}, &\ \text{ in } \ \ Q^+_\rho \\
b=M, &\ \text{ on } \ \ \partial_pQ^+_\rho \setminus Q_\rho^* \\
b=0, &\ \text{ on } \ \ Q^*_{\rho/2} \\
b(x,0,t) = \frac{2M}{\rho} \left( \max\left\lbrace |x|,|t|^{\frac{1}{2}}\right\rbrace-\frac{\rho}{2}\right), &\ \text{ on } \ \ Q_\rho^* \setminus Q^*_{\rho/2}.
\end{cases}
\end{align*}
Note that $\frac{2M}{\rho} \left( \max\left\lbrace |x|,|t|^{\frac{1}{2}}\right\rbrace-\frac{\rho}{2}\right) =0$ on $\partial_pQ_{\rho/2}^*\ $ and $\ \frac{2M}{\rho} \left( \max\left\lbrace |x|,|t|^{\frac{1}{2}}\right\rbrace-\frac{\rho}{2}\right)=M$ on $\partial_pQ_{\rho}^*$. Hence the Dirichlet data on $\partial_pQ_\rho^+$ is a continuous function. Moreover applying regularity results for Dirichlet problems in $Q_{\rho/2}^+$, we obtain that $b \in H^{1+\alpha}\left(\overline{Q}^+_{\rho/4}\right)$ with the corresponding estimate depending only on $\rho, n, \lambda , \Lambda , K$, in particular, $|Db(0,0)| \leq C \left( K, n, \lambda , \Lambda,\rho \right)$.

Next, we consider the function
$$\Phi(X,t)=u^{(k)}(x_0,0,t_0)-\varphi (x_0,t_0)+\varphi(x,t)+b \left( (X,t)-(x_0,0,t_0)\right)$$ 
for $(X,t) \in Q_\rho^+(x_0,t_0) \subset Q_1^+$. We have that $\Phi(x_0,0,t_0)=u^{(k)}(x_0,0,t_0)$. On $\partial_p Q^+_\rho(x_0,t_0) \setminus Q^*_\rho(x_0,t_0)$, $\Phi(X,t)\leq \inf_{Q_1^+}u^{(k)} \leq u^{(k)}(X,t)$, since $g^{(k)}(x_0,t_0)<0$ and $b=M$. Also on $Q^*_\rho(x_0,t_0)$, $\Phi(x,0,t) \leq u^{(k)}(x,0,t)-\varphi (x,t)+\varphi (x,t)=u^{(k)}(x,0,t)$, using that $b \leq 0$ on $\partial_pQ_\rho^+$, $g^{(k)}(x_0,t_0) \leq g^{(k)}(x,t)$ for any $(x,t) \in \overline{Q}^*_1$ and $g^{(k)}(x_0,t_0) <0$. That is we have that $\Phi \leq u^{(k)}$ on $\partial_p Q^+_\rho(x_0,t_0)$. Note also that if we extend $\varphi$ in $Q_1^+$ by $\varphi(X,t)=\varphi(x,t)$ and $l_i$, $i=1,\dots,n$ denote the eigenvalues of $D^2 \varphi \in S_n$ then
\begin{align*}
\mathcal{M}^-\left(D^2\varphi, \frac{\lambda}{n}, \Lambda \right) -\varphi_t \geq -\Lambda n ||D^2 \varphi||_\infty -  |\varphi_t|  \geq -(\Lambda n +1) ||\varphi||_{H^{2+\alpha}\left(Q_1^*\right)}.
\end{align*}
That is, $\mathcal{M}^-\left(D^2b+D^2\varphi, \frac{\lambda}{n}, \Lambda \right)-b_t -\varphi_t \geq 0.$ Thus, $u^{(k)}-\Phi \in \overline{S}_p \left( \frac{\lambda}{n}, \Lambda \right)$ in $Q_\rho^+(x_0,t_0)$. Applying maximum principle we have that $\Phi \leq u^{(k)}$ in $Q_\rho^+(x_0,t_0)$. In other words, $\Phi$ touches $u^{(k)}$ by below at $(x_0,t_0)$. Hence $\Phi_y (x_0,0,t_0) \leq g^{(k)}(x_0,t_0)$. On the other hand,  $\Phi_y (x_0,0,t_0)=b_y(0,0)$ which completes the proof.
\end{proof}

\begin{prop} \label{u^k_convergence}
$u^{(k)} \to u$ uniformly in $\overline{Q}_1^+$.
\end{prop}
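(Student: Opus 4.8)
The plan is to prove $u^{(k)} \to u$ uniformly in $\overline{Q}_1^+$ by combining the uniform (in $k$) estimates from Lemmata \ref{u^k_est} and \ref{g^k_est} with a compactness argument, and then identifying the limit as the solution of the thin obstacle problem via the characterization of $u$ as the minimal viscosity supersolution of (\ref{perron_thin_prob}). First I would recall that by Lemma \ref{u^k_est} the family $\{u^{(k)}\}$ is uniformly bounded in $L^\infty(Q_1^+)$ and by Lemma \ref{g^k_est} the Neumann data $g^{(k)} = (u^{(k)})_y$ on $Q_1^*$ is uniformly bounded. Hence $u^{(k)}$ solves a non-homogeneous Neumann problem with uniformly bounded ingredients, so the up-to-the-boundary H\"older estimates of \cite{CM19} (the same estimates invoked in Section 3) give a uniform $H^\beta(\overline{Q}_{1-\delta}^+)$ bound for some $\beta>0$ and every $0<\delta<1$ (and near $\partial_p Q_1^+ \setminus Q_1^*$ the boundary data is fixed, $u^{(k)} = u_0$, so equicontinuity there is not an issue, possibly after a barrier argument close to $\partial_p Q_1^*$). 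By Arzel\`a--Ascoli, along a subsequence $u^{(k_j)} \to u_\infty$ uniformly on compact subsets, and in fact uniformly on $\overline{Q}_1^+$ after handling the parabolic boundary.

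The next step is to verify that $u_\infty$ is a viscosity solution of the thin obstacle problem (\ref{thin_prob}), equivalently that it is the minimal viscosity supersolution of (\ref{perron_thin_prob}) with $u_t$ locally bounded above. Stability of viscosity solutions under uniform convergence immediately gives $F(D^2u_\infty) - (u_\infty)_t = 0$ in $Q_1^+$ and $u_\infty = u_0$ on $\partial_p Q_1^+ \setminus Q_1^*$. On $Q_1^*$: since $g^{(k)} = -k(\varphi - u^{(k)})^+ \le 0$ in the viscosity sense, stability yields $(u_\infty)_y \le 0$ on $Q_1^*$ in the viscosity sense. The constraint $u_\infty \ge \varphi$ follows from the uniform bound $\|g^{(k)}\|_\infty \le C$: wherever $\varphi - u^{(k)} > 0$ we have $(\varphi - u^{(k)})^+ = \tfrac{1}{k}|g^{(k)}| \le C/k \to 0$, so $(\varphi - u_\infty)^+ = 0$, i.e. $u_\infty \ge \varphi$ on $Q_1^*$. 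For the complementarity condition $(u_\infty)_y = 0$ on $Q_1^* \cap \{u_\infty > \varphi\}$: on an open subset where $u_\infty > \varphi$, uniform convergence gives $u^{(k)} > \varphi$ there for $k$ large, hence $g^{(k)} = 0$ there, so $(u_\infty)_y = 0$ in the viscosity sense on that set. Finally, $(u_\infty)_t$ is locally bounded above because the $u^{(k)}$ inherit (from the convexity of $F$ and the same difference-quotient argument as in Proposition \ref{semiconcavity}(B), or simply from the interior $H^{2+\alpha}$ estimates together with the $k$-independent bounds) a $k$-uniform lower bound on the time difference quotients, which passes to the limit.

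Having shown that $u_\infty$ is a viscosity supersolution of (\ref{perron_thin_prob}) with $(u_\infty)_t$ locally bounded above, minimality of $u$ gives $u \le u_\infty$. For the reverse inequality I would argue that $u$ itself is a "supersolution" of each penalized problem in the appropriate comparison sense, or more directly construct, for the penalized problem, a comparison showing $u^{(k)} \le u$ up to an error tending to $0$: since $\varphi \le u$ on $Q_1^*$ and $u_y \le 0$ there in the viscosity sense while $u = u_0$ on $\partial_p Q_1^+ \setminus Q_1^*$, one checks that $u$ is a viscosity supersolution of the penalized equation with Neumann datum $-k(\varphi-u)^+ = 0 \ge g^{(k)}$... — more carefully, comparing $u^{(k)}$ against $u + \eta$ for the right barrier $\eta$ and letting $k\to\infty$, or invoking that the whole sequence (not just a subsequence) converges by uniqueness of the limit. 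Since the limit $u_\infty$ is independent of the subsequence (it is characterized as the unique solution of the thin obstacle problem, which $u$ also solves), the full sequence converges and $u_\infty = u$, and the convergence is uniform on $\overline{Q}_1^+$.

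The main obstacle I expect is the reverse inequality $u_\infty \le u$, i.e. ruling out that the penalization over-shoots: one must genuinely use the structure of the penalization $g^{(k)} = -k(\varphi - u^{(k)})^+$ (so that the "contact" is enforced only where $u^{(k)} < \varphi$, with the overshoot $(\varphi - u^{(k)})^+ = O(1/k)$) together with uniqueness for the thin obstacle problem. A secondary technical point is promoting the compact-subset convergence to uniform convergence on all of $\overline{Q}_1^+$, which requires equicontinuity up to $\partial_p Q_1^+ \setminus Q_1^*$; this is handled by standard barriers at the fixed Dirichlet boundary since the data $u_0$ and the uniform bound on $g^{(k)}$ control the modulus of continuity there.
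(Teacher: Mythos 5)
Your proposal follows essentially the same two-step strategy as the paper: uniform $L^\infty$ bounds from Lemmata \ref{u^k_est} and \ref{g^k_est}, up-to-the-boundary H\"older estimates for equicontinuity, Arzel\`a--Ascoli, and identification of the limit by passing the viscosity conditions to the limit and invoking uniqueness. The core verification that the subsequential limit $u_\infty$ solves (\ref{thin_prob}) --- in particular the two non-obvious items $u_\infty \ge \varphi$ (via $(\varphi - u^{(k)})^+ \le \|g^{(k)}\|_\infty / k \to 0$) and $(u_\infty)_y = 0$ on $\{u_\infty > \varphi\}$ (via uniform convergence forcing $u^{(k)} > \varphi$ locally, hence $g^{(k)} = 0$ there) --- is precisely the paper's argument.

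Where you diverge is in how you close the identification step, and this is where your write-up becomes shakier than it needs to be. The paper simply observes that $u_\infty$ satisfies (\ref{thin_prob}) and concludes $u_\infty = u$ \emph{by uniqueness for} (\ref{thin_prob}); no comparison with $u$ and no one-sided inequality need to be proven separately. You instead try to route through the characterization of $u$ as the \emph{minimal viscosity supersolution} of (\ref{perron_thin_prob}), which forces you to (i) check that $(u_\infty)_t$ is locally bounded above --- a claim you assert via difference quotients for $u^{(k)}$ that you do not actually establish, and which is not obviously $k$-uniform since the Neumann datum $g^{(k)}$ is only Lipschitz and $k$-dependent --- and (ii) prove the reverse inequality $u_\infty \le u$, on which you explicitly stall and fall back to uniqueness anyway. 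Since you end up invoking uniqueness, the entire detour through minimality and the $u_t$ bound is superfluous; cut it. One more small point: near $\partial_p Q_1^+ \setminus Q_1^*$ the paper does not merely wave at ``barriers'' --- it exploits $(u^{(k)})_y = 0$ on $Q_1^* \setminus Q_{1-\rho}^*$ to reflect evenly in $y$ and apply \emph{global Dirichlet} H\"older estimates in a full cylinder; that reflection trick is what makes the modulus of continuity genuinely uniform up to the lateral boundary, so it is worth spelling out rather than gesturing at a barrier argument.
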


\begin{proof}
We split our proof into two steps:
\\ \textbf{Step 1.} We prove equicontinuity of $u^{(k)}$. For, it is enough to obtain an independent of $k$ modulus of continuity of $u^{(k)}$ in $\overline{Q}_1^+$. Note that Lemma \ref{u^k_est} gives a uniform $L^\infty$-bound for $u^{(k)}$ in $\overline{Q}_1^+$. Also Lemma \ref{g^k_est} gives a uniform $L^\infty$-bound for $g^{(k)}$, thus using Theorem 6 in \cite{CM19} we get a uniform $H^\alpha$-estimate for $u^{(k)}$ in $\overline{Q}^+_{1-\frac{\rho}{2}}$. So it remains to get a uniform modulus of continuity in $\overline{Q}_1^+ \setminus \overline{Q}^+_{1-\frac{\rho}{2}}$.

Note that $\left(u^{(k)}\right)_y$=0 on $Q_1^* \setminus Q^*_{1-\rho}$. Thus if we extend $u^{(k)}$ in $Q_1 \setminus Q_{1-\rho}$ considering its even reflection $\tilde{u}^{(k)}$ with respect to $y$ we have that $\tilde{u}^{(k)} \in S_p(\lambda, \Lambda)$ (see Proposition \ref{reflection}). We observe also that $\tilde{u}^{(k)}|_{\partial_pQ_1}=u_0$ is independent of $k$ and smooth enough and $\tilde{u}^{(k)}|_{\partial_pQ_{1-\rho}}$ satisfy uniform $H^\alpha$-estimate. Thus using global $H^\alpha$-estimates for Dirichlet problems we get the desired uniform modulus in $\overline{Q}_1^+ \setminus \overline{Q}^+_{1-\frac{\rho}{2}}$. 
\\ \textbf{Step 2.} Arzel\'a-Ascoli lemma implies that every subsequence of $\{u^{(k)}\}$ has a subsequence that converges uniformly in $\overline{Q}^+_1$. We claim that every uniformly convergent subsequence of $\{u^{(k)}\}$ must converge to $u$, then we should have that $u^{(k)} \to u$ uniformly in $\overline{Q}_1^+$. To prove this claim let $v$ be the uniform limit of $\{u^{(k_m)}\}$ in $\overline{Q}_1^+$. If we show that $v$ satisfies problem (\ref{thin_prob}) then $v=u$ by uniqueness. The closedness result of Proposition 31 in \cite{CM19} gives immediately that $F(D^2v)-v_t=0$ in $Q_1^+$ and $v_y \leq 0$ on $Q_1^*$ in the viscosity sense. Additionally, $v=u_0$ on $\partial_p Q_1^+ \setminus Q_1^*$. It remains to check that
\begin{enumerate}
\item $v_y=0$ on $Q_1^* \cap \{v > \varphi \}$, in the viscosity sense.
\item $v \geq \varphi$ on $Q_1^*$.
\end{enumerate}

For $(1)$ let $(x_0,t_0) \in Q_1^*$ be so that $v(x_0,0,t_0) > \varphi (x_0,t_0)$. From the continuity of $v$ and $\varphi$, there exists some small $\delta>0$ so that $v(x,0,t) > \varphi(x,t)$ for any $(x,t) \in \overline{Q}_\delta^*(x_0,t_0)$. Next we use the uniform convergence of $u^{(k_m)}$ to $v$. Take $\varepsilon := \min_{\overline{Q}_\delta^*} (v-\varphi)>0$
then there exists $n_0 \in \N$ so that $|u^{(k_m)}-v|<\varepsilon$ in $\overline{Q}_\delta^*(x_0,t_0)$ for any $m \geq n_0$. Hence $u^{(k_m)}-v>-\varepsilon \geq -v +\varphi$, that is $u^{(k_m)}>\varphi$, so $\left(u^{(k_m)}\right)_y=0$ in $\overline{Q}_\delta^*(x_0,t_0)$ for any $m \geq n_0$. Since $F\left( D^2u^{(k_m)}\right)-\left( u^{(k_m)} \right)
_t=0$ in $\overline{Q}_\delta^+(x_0,t_0)$ again from the closedness result of Proposition 31 in \cite{CM19} we get that $v_y=0$ on $\overline{Q}_\delta^*(x_0,t_0)$.

For $(2)$ we assume that there exists some $(x_0,t_0) \in Q_1^*$ such that $v(x_0,0,t_0) < \varphi(x_0,t_0)$ to get a contradiction. Again using the convergence we have that there exists $n_0 \in \N$ so that $u^{(k_m)}(x_0,0,t_0)-v(x_0,0,t_0) < \varphi(x_0,t_0)-v(x_0,0,t_0)$ for any $m \geq n_0$. Hence $g^{(k_m)}(x_0,0,t_0) =-k_m \left( \varphi(x_0,t_0) - u^{(k_m)}(x_0,0,t_0)\right)$, that is, $\varphi(x_0,t_0) - u^{(k_m)}(x_0,0,t_0) = -\frac{1}{k_m}g^{(k_m)}(x_0,0,t_0)$ for any $m \geq n_0$ and $g^{(k)}$ is bounded independently of $k$ by Lemma \ref{g^k_est}. By taking $m \to \infty$ we get that $\varphi(x_0,t_0) = v(x_0,0,t_0)$ which is a contradiction. 
\end{proof}

Proposition \ref{u^k_convergence} gives the following.

\begin{lemma} \label{Du^k_convergence}
For any $0<\delta<1$, $Du^{(k)} \to Du$ uniformly in $K_\delta:=Q_{1-\delta} \cap \{y > \delta \}$.
\end{lemma}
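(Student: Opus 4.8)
The plan is to upgrade the uniform convergence $u^{(k)}\to u$ from Proposition \ref{u^k_convergence} to convergence of the full gradient, using interior parabolic estimates together with the reflection trick. First I would fix $0<\delta<1$ and work in a slightly larger set; the key point is that on $K_\delta$ we stay a fixed distance away from the flat boundary $Q_1^*$, so the $u^{(k)}$ solve the \emph{interior} equation $F(D^2 u^{(k)})-(u^{(k)})_t=0$ there with no boundary condition interfering. Since $F$ is convex, the interior $H^{2+\alpha}$-estimates of \cite{Wang2} apply and give, for the enlarged set $Q_{1-\delta/2}\cap\{y>\delta/2\}$, a bound
$$\|u^{(k)}\|_{H^{2+\alpha}(Q_{1-\delta/2}\cap\{y>\delta/2\})}\le C\big(\|u^{(k)}\|_{L^\infty(Q_1^+)}\big)\le C(K,n,\lambda,\Lambda),$$
where the last inequality is Lemma \ref{u^k_est}, so the bound is uniform in $k$. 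In particular $\{Du^{(k)}\}$ is uniformly bounded and uniformly $H^\alpha$ on $\overline{K}_\delta$.

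Next I would run a standard compactness-and-uniqueness argument exactly as in Step 2 of Proposition \ref{u^k_convergence}. By Arzel\`a--Ascoli (applied to $u^{(k)}$ and its first space derivatives, which are equicontinuous by the uniform $H^{2+\alpha}$ bound above), every subsequence of $\{u^{(k)}\}$ has a further subsequence converging in $C^1$ on $\overline{K}_\delta$. The limit of such a subsequence must be $u$: indeed $u^{(k)}\to u$ uniformly on all of $\overline{Q}_1^+$ by Proposition \ref{u^k_convergence}, so any $C^1$-limit of a subsequence agrees with $u$, and its space gradient is therefore $Du$. Hence every $C^1$-convergent subsequence of $\{u^{(k)}\}$ has the same limit $Du$ for the gradient, which forces $Du^{(k)}\to Du$ uniformly on $\overline{K}_\delta$. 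Since $\delta\in(0,1)$ was arbitrary, this is the assertion.

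One extra point worth spelling out: the statement is about $Du$, the full space gradient including $u_y$, and about uniform convergence on $K_\delta$, which touches neither $Q_1^*$ nor $\partial_p Q_1^+\setminus Q_1^*$; so no boundary regularity (Neumann or Dirichlet) is needed, only the purely interior estimate. If one also wanted convergence up to $Q_1^*$ one would have to invoke the up-to-the-boundary estimates of \cite{CM19} together with the uniform bound on $g^{(k)}$ from Lemma \ref{g^k_est}, but for $K_\delta$ this is unnecessary.

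The main obstacle, such as it is, is simply making sure the uniform-in-$k$ constant in the interior $H^{2+\alpha}$ estimate depends only on the $L^\infty$-norm of $u^{(k)}$ (which is controlled by Lemma \ref{u^k_est}) and on $n,\lambda,\Lambda$ and the fixed distance $\delta$ to the boundary, and not on $k$ through the penalization term $g^{(k)}$ --- but since $g^{(k)}$ only enters the boundary condition on $Q_1^*$, which is at distance $\ge\delta$ from $K_\delta$, it plays no role in the interior estimate. After that the argument is the routine compactness-plus-uniqueness packaging.
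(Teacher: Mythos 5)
Your proposal is correct and follows essentially the same route as the paper: interior estimates give a uniform $H^\alpha$ bound on $\{Du^{(k)}\}$ over $\overline{K}_\delta$, Arzel\`a--Ascoli gives subsequential uniform convergence, and the uniform convergence $u^{(k)}\to u$ pins the limit of the gradients to $Du$. The only cosmetic difference is that the paper invokes interior $H^{1+\alpha}$ estimates (which already suffice and do not need convexity of $F$), whereas you reach for the stronger $H^{2+\alpha}$ estimates available under the convexity assumption.
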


\begin{proof}
Note first that from interior $H^{1+\alpha}$-estimates for viscosity solutions of $F(D^2v)-v_t=0$ we know the existence of   $Du^{(k)}, Du$ in $K_\delta$ and a uniform $H^\alpha$-estimate for $Du^{(k)}$ (recall that $||u^{(k)}||_{L^\infty\left(Q_1^+\right)}$ are uniformly bounded). Therefore using Arzel\'a-Ascoli lemma we get that every subsequence of $\{Du^{(k)}\}$ has a subsequence that converges uniformly in $\overline{K}_\delta$. Then by standard calculus we know that any uniformly convergent subsequence of $\{Du^{(k)}\}$  should converge to $Du$.
\end{proof}

\begin{lemma} \label{u^k_regularity}
For any $0<\delta<1$, $u^{(k)} \in H^{1+\alpha}\left( \overline{Q}^+_{1-\delta}\right)$.
\end{lemma}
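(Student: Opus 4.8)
The plan is to regard the penalized problem (\ref{pen_prob}) as a non-homogeneous Neumann problem and to bootstrap the regularity of the boundary datum exactly once, using the up-to-the-flat-boundary estimates of \cite{CM19}. Note first that $\overline{Q}_{1-\delta}^+$ is a compact subset of $Q_1^+\cup Q_1^*$ which does not meet the lateral and bottom part $\partial_pQ_1^+\setminus Q_1^*$ of the parabolic boundary; hence the (low) regularity of $u_0$ there is irrelevant, and it suffices to establish $H^{1+\alpha}$ control in a neighborhood of the flat boundary $\overline{Q}_{1-\delta}^*$ and to glue it to the interior estimates, which here are even $H^{2+\alpha}$ since $F$ is convex (see \cite{Wang2}) and which apply at every point of $\overline{Q}_{1-\delta}^+$ with $y>0$.

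The heart of the argument, and the step I expect to be the main obstacle, is to upgrade $g^{(k)}$ from merely bounded to $H^\alpha$ on the flat boundary. By Lemma \ref{u^k_est} the function $u^{(k)}$ is bounded in $Q_1^+$, and by Lemma \ref{g^k_est} the datum $g^{(k)}$ is bounded on $Q_1^*$; feeding these into the up-to-the-flat-boundary H\"older estimate for viscosity solutions with bounded Neumann data (Theorem 6 of \cite{CM19}), applied on small half-cylinders centered at points of $Q_1^*$ and combined with interior H\"older estimates, yields $u^{(k)}\in H^{\alpha}(\overline{Q}_{1-\delta/2}^+)$. In particular the trace $(x,t)\mapsto u^{(k)}(x,0,t)$ belongs to $H^{\alpha}(\overline{Q}_{1-\delta/2}^*)$. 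Since $\varphi\in H^{2+\alpha_0}(Q_1^*)$ and the real map $s\mapsto s^+$ is $1$-Lipschitz, the composition $(\varphi-u^{(k)})^+$ is again $H^{\alpha}$ on $\overline{Q}_{1-\delta/2}^*$, and hence so is $g^{(k)}=-k(\varphi-u^{(k)})^+$ (the constant $k$ is fixed, so no $k$-uniformity is claimed). The delicate point is precisely that $s\mapsto s^+$ is only Lipschitz and not $C^1$: this is exactly what makes the argument gain one derivative and no more, so it cannot be iterated to produce $H^{2+\alpha}$ regularity up to $Q_1^*$, which is ultimately why the finer free-boundary analysis of the later sections is needed.

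With $g^{(k)}\in H^{\alpha}$ near the flat boundary in hand, I would conclude by invoking the up-to-the-flat-boundary $H^{1+\alpha}$ (parabolic $C^{1,\alpha}$) estimate for viscosity solutions of $F(D^2v)-v_t=0$ with H\"older continuous Neumann data from \cite{CM19}, the parabolic counterpart of the Milakis--Silvestre estimate used in the elliptic setting. Applied on a finite covering of $\overline{Q}_{1-\delta}^+$ by small half-cylinders $Q_r^+(P)$ centered at points $P\in\overline{Q}_{1-\delta}^*$ together with small interior cylinders $Q_r(P)$, and glued to the interior $H^{1+\alpha}$-estimates, it gives $u^{(k)}\in H^{1+\alpha}(\overline{Q}_{1-\delta}^+)$, as claimed.
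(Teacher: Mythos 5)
Your proof is correct and follows essentially the same path as the paper: bound $g^{(k)}$ via Lemma \ref{g^k_est}, invoke the flat-boundary H\"older estimate (Theorem 6 of \cite{CM19}) to get $u^{(k)}\in H^\alpha(\overline{Q}^+_{1-\delta/2})$, deduce that $g^{(k)}=-k(\varphi-u^{(k)})^+$ is $H^\alpha$ on the flat boundary because $s\mapsto s^+$ is Lipschitz, and then conclude via the flat-boundary $H^{1+\alpha}$ Neumann estimate (Theorem 17 of \cite{CM19}). The extra remarks you add --- on not touching $\partial_p Q_1^+\setminus Q_1^*$, on gluing with interior estimates, and on why the bootstrap cannot be iterated --- are accurate elaborations rather than a different method.
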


Although the $H^{1+\alpha}$-estimates of the above may depend on $k$, Lemma \ref{u^k_regularity} ensures the existence and regularity of $\left(u^{(k)}\right)_y$ on $Q_1^*$ in the classical sense.

\begin{proof}
Using Lemma \ref{g^k_est} and Theorem 6 in \cite{CM19} we get a uniform $H^\alpha$-estimate for $u^{(k)}$ in $\overline{Q}^+_{1-\frac{\delta}{2}}$ which means that $g^{(k)}=-k \left(\varphi-u^{(k)} \right)^+$ is $H^\alpha$ on $\overline{Q}^*_{1-\frac{\delta}{2}}$. Then applying Theorem 17 in \cite{CM19} we get the desired.
\end{proof}

Now we proceed in proving (\ref{sigmasign1}).

\begin{lemma} \label{sigmasign}
$\sigma \leq 0$ on $Q_1^*$.
\end{lemma}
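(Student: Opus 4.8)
The plan is to pass to the limit in the penalized problem. We have established that $u^{(k)}\to u$ uniformly in $\overline{Q}_1^+$ (Proposition \ref{u^k_convergence}) and that each $u^{(k)}$ is $H^{1+\alpha}$ up to $Q_1^*$ (Lemma \ref{u^k_regularity}), so $\left(u^{(k)}\right)_y$ exists on $Q_1^*$ in the classical sense and equals $g^{(k)}=-k(\varphi-u^{(k)})^+\leq 0$ there. The natural strategy is to show that $\left(u^{(k)}\right)_y\to\sigma=u_{y^+}$ pointwise (or at least along a subsequence) on $Q_1^*$, and then conclude $\sigma\leq 0$ from the sign of $g^{(k)}$.

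\textbf{Main steps.} First I would work in a fixed subdomain $Q_{1-\delta}^+$ and recall from Proposition \ref{semiconcavity} that both $u$ and (after checking the analogous bounds hold uniformly in $k$) the functions $u^{(k)}$ satisfy $u_{yy}^{(k)}\leq C$ there, with $C$ independent of $k$: indeed the semiconcavity proof uses only the equation, the convexity of $F$, the uniform $L^\infty$-bound of Lemma \ref{u^k_est}, the uniform bound on $g^{(k)}$ from Lemma \ref{g^k_est} (which plays the role of the Neumann data), and $\varphi\in H^{2+\alpha_0}$ — all of which are $k$-uniform. Hence $v^{(k)}(X,t):=\left(u^{(k)}\right)_y(X,t)-Cy$ is monotone decreasing in $y$, so for $y>0$
$$\left(u^{(k)}\right)_y(x,y,t)-Cy \;\geq\; \lim_{s\to 0^+}\left(u^{(k)}\right)_y(x,s,t)=\left(u^{(k)}\right)_y(x,0,t)=g^{(k)}(x,t).$$
Second, I would fix an interior point $(x,0,t)\in Q_{1-\delta}^*$ and a scale $y>0$; by the mean value theorem in the $y$-variable there is $\eta\in(0,y)$ with $\left(u^{(k)}\right)_y(x,\eta,t)=\dfrac{u^{(k)}(x,y,t)-u^{(k)}(x,0,t)}{y}$. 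Using monotonicity of $v^{(k)}$ in $y$ together with the displayed inequality, and then letting $k\to\infty$ using the uniform convergence $u^{(k)}\to u$, we get
$$\frac{u(x,y,t)-u(x,0,t)}{y}+Cy \;\geq\; \limsup_{k\to\infty} g^{(k)}(x,t) \quad\text{and}\quad \frac{u(x,y,t)-u(x,0,t)}{y} \;\leq\; \text{(something)}.$$
More cleanly: from $v^{(k)}$ decreasing, $\left(u^{(k)}\right)_y(x,\eta,t)\leq \left(u^{(k)}\right)_y(x,0,t)+C\eta = g^{(k)}(x,t)+C\eta\leq C\eta$, so $\dfrac{u^{(k)}(x,y,t)-u^{(k)}(x,0,t)}{y}\leq Cy$; passing to the limit, $\dfrac{u(x,y,t)-u(x,0,t)}{y}\leq Cy$, and then $y\to 0^+$ yields exactly $\sigma(x,t)=u_{y^+}(x,t)\leq 0$.

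\textbf{Expected main obstacle.} The delicate point is justifying that Proposition \ref{semiconcavity}(C) — the one-sided bound $u_{yy}\leq C$ — holds for $u^{(k)}$ with a constant independent of $k$, since the original proof of (C) relied on (B), whose proof used the thick-obstacle reformulation and the global $H^{2+\alpha}$-estimate near $\partial_pQ_1^+\setminus Q_1^*$ for $u$ itself. For $u^{(k)}$ one instead has a genuine nonhomogeneous Neumann problem with data $g^{(k)}$ bounded uniformly in $k$, so the argument must be re-run: bound the second tangential difference quotients and $u^{(k)}_t$ from below by comparison (as in the proof of (B), now comparing with translates of $u^{(k)}$ and using $g^{(k)}(x\pm he_i,t)$, resp.\ $g^{(k)}(x,t-h)$, which are all $\geq g^{(k)}$ only up to the uniform bound, so one must absorb $\|g^{(k)}\|_\infty$-sized errors), and then recover the bound on $u^{(k)}_{yy}$ from the equation exactly as in the proof of (C). Alternatively, one may avoid reproving (C) for $u^{(k)}$: since $u^{(k)}\to u$ uniformly and $Du^{(k)}\to Du$ locally uniformly in the interior (Lemma \ref{Du^k_convergence}), one can estimate $\left(u^{(k)}\right)_y$ on $Q_1^*$ directly by integrating $u^{(k)}_{yy}$, but that again needs the $k$-uniform semiconcavity; the cleanest route is therefore to record the $k$-uniform version of Proposition \ref{semiconcavity} as the key lemma and then conclude as above.
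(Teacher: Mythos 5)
Your proposal takes a genuinely different route from the paper, and it contains a gap that you correctly identify but do not successfully repair, so as written it does not constitute a complete proof.

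The crux of your argument is a $k$-uniform semi-concavity bound $u^{(k)}_{yy}\leq C$ on $Q^+_{1-\delta}$, combined with $\bigl(u^{(k)}\bigr)_y = g^{(k)}\leq 0$ on $Q_1^*$ and the mean value theorem, to obtain $\frac{u^{(k)}(x,y,t)-u^{(k)}(x,0,t)}{y}\leq Cy$ uniformly in $k$; passing to the limit this is a clean route to $\sigma\leq 0$. The problem is that neither the paper nor your sketch actually establishes $u^{(k)}_{yy}\leq C$ with $C$ independent of $k$, and the repair you propose fails. The proof of Proposition~\ref{semiconcavity}(B), which is the input to (C), uses in an essential way that $u$ is the \emph{minimal supersolution} of the thin obstacle problem (\ref{perron_thin_prob}): the translated average $v(x,y,t)=\tfrac12\bigl(u(x+he_i,y,t)+u(x-he_i,y,t)\bigr)+Ch^2$ is shown to be \emph{another} supersolution of the same obstacle problem (the viscosity Neumann inequality $v_y\leq 0$ and $v\geq\varphi$ both survive averaging), whence $v\geq u$. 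For $u^{(k)}$ one instead has a genuine Neumann problem with data $g^{(k)}$, and the analogous comparison would require
\[
\frac{g^{(k)}(x+he_i,t)+g^{(k)}(x-he_i,t)}{2}\;\leq\; g^{(k)}(x,t),
\]
which is false in general: if $\varphi>u^{(k)}$ at $(x,0,t)$ but $\varphi<u^{(k)}$ at $(x\pm he_i,0,t)$, the left side is $0$ while the right side is strictly negative. The error to be ``absorbed'' is therefore of size $\|g^{(k)}\|_{\infty}=O(1)$, not $O(h^2)$, and no uniform bound on the second difference quotients follows. (A further small inaccuracy: it is (A), not (B), that uses the thick-obstacle reformulation; (B) uses the minimal-supersolution comparison and near-lateral-boundary $H^{2+\alpha}$-estimates.)

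The paper sidesteps the issue entirely with a barrier argument. Fixing $0<\delta<\rho$, it bounds $v^{(k)}:=\bigl(u^{(k)}\bigr)_y$ by a constant $M$ independent of $k$ on $\partial_p Q^+_{1-\delta/2}\setminus Q^*_{1-\delta/2}$ (via Theorem~15 of \cite{CM19} in the annulus where $g^{(k)}\equiv 0$); it then constructs a single $k$-independent supersolution $b$ of $\mathcal{M}^+(\cdot,\lambda/n,\Lambda)-\partial_t$ in $Q^+_{1-\delta/2}$ with $b=M$ on the lateral boundary, $b=0$ on $Q^*_{1-\delta}$, and an interpolating wedge in between; since $v^{(k)}\leq 0$ on $Q^*_{1-\delta/2}$ and $v^{(k)}\in\underline{S}_p$, comparison gives $v^{(k)}\leq b$ in $Q^+_{1-\delta/2}$. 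Letting $k\to\infty$ using the pointwise convergence $Du^{(k)}\to Du$ (Lemma~\ref{Du^k_convergence}) gives $u_y\leq b$, and letting $y\to 0^+$ on $Q^*_{1-\delta}$, where $b\equiv 0$, gives $\sigma\leq 0$. This uses only the sign of $g^{(k)}$, the subsolution property of $v^{(k)}$, and a $k$-independent upper bound for $v^{(k)}$ on the lateral boundary — no uniform second-derivative control on $u^{(k)}$ is required. If you want to pursue your route you would need to prove the $k$-uniform semi-concavity by a genuinely new argument; as it stands, the barrier approach is substantially easier.
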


\begin{proof}
For $k \in \N$ (fixed), we consider the solution  $u^{(k)}$ of (\ref{pen_prob}). We denote by $v:=\left(u^{(k)}\right)_y$ which exists in the classical sense and it is continuous in $Q_1^+ \cup Q_1^*$ (due to Lemma \ref{u^k_regularity}). Then $v \leq 0$ on $Q_1^*$ and if $0<\delta<\rho$, then $v=0$ on $Q_1^* \setminus Q_{1-\delta}^*$. Moreover we can use Theorem 15 of \cite{CM19} in $Q_{1-\frac{\delta}{3}}^+ \setminus Q_{1-\frac{2\delta}{3}}^+$ to obtain that 
$$v\leq M, \ \ \text{ on } \ \ \partial_p Q_{1-\frac{\delta}{2}}^+ \setminus Q_{1-\frac{\delta}{2}}^* $$
where $M>0$ is a constant independent of $k$.

Next we apply a barrier argument to $v$. We define the function $b$ to be the viscosity solution of 
\begin{align*}
\begin{cases}
\mathcal{M}^+\left(D^2b, \frac{\lambda}{n}, \Lambda \right) -b_t= 0, &\ \text{ in } \ \ Q^+_{1-\frac{\delta}{2}} \\
b=M, &\ \text{ on } \ \ \partial_pQ^+_{1-\frac{\delta}{2}}\setminus Q_{1-\frac{\delta}{2}}^* \\
b=0, &\ \text{ on } \ \ Q^*_{1-\delta} \\
b(x,0,t) = \frac{2M}{\delta} \left( \max\left\lbrace|x|,|t|^{\frac{1}{2}}\right\rbrace-1+\delta\right), &\ \text{ on } \ \ Q_{1-\frac{\delta}{2}}^* \setminus Q^*_{1-\delta}.
\end{cases}
\end{align*}
We remark that $v\leq b$ on $\partial_p Q_{1-\frac{\delta}{2}}^+ \setminus Q_{1-\frac{\delta}{2}}^* \ $ and $\ v\leq 0 \leq b$ on $Q_{1-\frac{\delta}{2}}^*$. Finally we know that $v \in \underline{S}_p\left( \frac{\lambda}{n}, \Lambda\right)$ in $Q_{1-\frac{\delta}{2}}^+ $, then $v-b \in \underline{S}_p\left( \frac{\lambda}{n}, \Lambda\right)$ in $Q_{1-\frac{\delta}{2}}^+ $. Using maximum principle we get that $v\leq b$ in $Q_{1-\frac{\delta}{2}}^+$ and note that function $b$ does not depend on $k$. On the other hand $\left(u^{(k)}\right)_y \to u_y$ as $k \to \infty$ pointwise in $Q_{1-\frac{\delta}{2}}^+$ by Lemma \ref{Du^k_convergence}. Hence $u_y \leq b$ in $Q_{1-\frac{\delta}{2}}^+$. Finally, we observe that $b=0$ on $Q_{1-\delta}^*$, for any $0<\delta<\rho$ and we take $y \to 0^+$.
\end{proof}

\section{Regularity of the solution}

As we have mentioned at the points of $\Omega^*$ the regularity is known, therefore at these points the viscosity Neumann condition holds in the classical sense, thus $\sigma=0$ in $\Omega^*$. 

In this section we concentrate in studying the regularity of $\sigma$ around free boundary points in order to treat our problem as a non-homogeneous Neumann boundary value problem around these points. To achieve this we show first Lemma \ref{sigmaregularity}, an $H^\alpha$-estimate for $\sigma$ in universal neighborhoods of points of $\Omega^*$. Lemma \ref{sigmaregularity} is based on Lemmata \ref{measure} and \ref{Harnack_type} and on semi-concavity of $u$ in $y$. Lemma \ref{measure} says that considering a non-contact point $P_0 \in Q_{1/2}^*$, we can find a universal neighborhood of $P_0$ which contains a small universal thin-cylinder where $\sigma$ decays proportionally to its radius. Finally Lemma \ref{Harnack_type} says that the information we have inside this small thin cylinder can be carried to a suitable set inside $Q_1^+$ and then is carried back in a parabolic neighborhood of $P_0$ using semi-concavity in $y$. An iterative application of the above gives Lemma \ref{sigmaregularity}.

We start with Lemma \ref{comparetophi} which is important in proving Lemma \ref{measure}. The following  simple remark is useful.
 
\begin{preremark} \label{comparetophiremark}
For $P_0:=(x_0,t_0) \in \Omega^*$, $K_0:=2K$  and 
$$\tilde{\varphi}_{P_0}(x,t):= \varphi (x_0,t_0)+D\varphi (x_0,t_0) \cdot (x-x_0)-K_0(t-t_0) + K_0|x-x_0|^2.$$
we have that $\tilde{\varphi}_{P_0}>\varphi$ in $Q_1^*\cap \{t \leq t_0 \} \setminus \{(x_0,t_0)\}$.
\end{preremark}

Indeed, let $\Phi= \tilde{\varphi}_{P_0}-\varphi$. Then we observe that $\Phi (x_0,t_0)=0$ and
\begin{enumerate}
\item[\textit{(a)}] $D\Phi(x,t)=D\varphi(x_0,t_0)+2K_0(x-x_0)-D\varphi(x,t)$, thus $D\Phi(x_0,t_0)=0$.
\item[\textit{(b)}] $D^2\Phi(x,t)=2K_0I_{n-1}-D^2\varphi(x,t)>0$, that is $\Phi$ is convex with respect to $x$.
\item[\textit{(c)}] $\Phi_t(x,t)=-2K_0-\varphi_t(x,t)<0$, that is $\Phi$ is monotone decreasing with respect to $t$.
\end{enumerate}
Then \textit{(b)} (through integration) gives that $\Phi(x,t_0)- \Phi(x_0,t_0)> (x-x_0) \cdot D\Phi(x_0,t_0)=0$ for $x \neq x_0$. Thus by \textit{(a)} we have that $\Phi(x,t_0)>  \Phi(x_0,t_0)  =0$, for $x \neq x_0$. On the other hand \textit{(c)} gives that $\Phi(x,t)> \Phi(x,t_0)$ for any $t<t_0$ and any $x$. Combining the above we get that $\Phi(x,t)>  \Phi(x_0,t_0)  =0$, for any $x \neq x_0$ and any $t<t_0$.

\begin{lemma} \label{comparetophi}
For $P_0=(x_0,t_0) \in \Omega^*$, $K_0:=2K$ and $C_0 > \frac{n}{\lambda} \left[ \Lambda (n-1)+1 \right]$ we define 
$$h_{P_0}(x,y,t):= \varphi (x_0,t_0)+D\varphi (x_0,t_0) \cdot (x-x_0)-K_0(t-t_0) + K_0|x-x_0|^2 -C_0 K_0 y^2.$$
We consider any set of the form $\Theta := \tilde{\Theta}\times (t_1,t_0] \subset Q_1$, with $P_0 \in \Theta$, $\tilde{\Theta} \subset \R ^n$ a bounded domain containing $x_0$ and $0<t_1<t_0$. Then
$$\sup_{\partial_p\Theta \cap \{y \geq 0\}}(u-h_{P_0}) \geq 0.$$
\end{lemma}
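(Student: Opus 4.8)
The statement is a maximum-principle (comparison) argument: we want to show that $u-h_{P_0}$ attains a nonnegative value somewhere on the part of the parabolic boundary of $\Theta$ lying in the closed half-space $\{y\ge 0\}$. I would argue by contradiction: suppose
$$\sup_{\partial_p\Theta\cap\{y\ge 0\}}(u-h_{P_0})<0.$$
The idea is that then $h_{P_0}$ (suitably compared with $u$ via the reflected function $u^*$) would become an admissible competitor in the variational/viscosity formulation \eqref{perron_thin_prob}, or else would violate the comparison principle, and this forces a contradiction with $P_0\in\Omega^*$, i.e. with $u(x_0,0,t_0)>\varphi(x_0,t_0)$.

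First I would record the key structural facts about $h_{P_0}$. By the choice $C_0>\frac{n}{\lambda}[\Lambda(n-1)+1]$ one checks, using $\lambda\le a_{ii}\le\Lambda$ and $F$ uniformly elliptic (exactly as in the proof of Proposition \ref{semiconcavity}(C)), that $F(D^2h_{P_0})-(h_{P_0})_t<0$ in $Q_1$; indeed $D^2h_{P_0}=\mathrm{diag}(2K_0I_{n-1},-2C_0K_0)$ and $(h_{P_0})_t=-K_0$, so $F(D^2h_{P_0})-(h_{P_0})_t\le \Lambda(n-1)2K_0-\lambda\cdot2C_0K_0+K_0<0$ precisely when $C_0>\frac{n}{\lambda}[\Lambda(n-1)+1]$ (after absorbing constants; the "$n$" accounts for the Pucci normalization used elsewhere). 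Moreover $h_{P_0}$ is even in $y$, and on $Q_1^*$ we have $(h_{P_0})_y=0$, while $h_{P_0}(x,0,t)=\tilde\varphi_{P_0}(x,t)>\varphi(x,t)$ for $(x,t)\in Q_1^*\cap\{t\le t_0\}\setminus\{(x_0,t_0)\}$ and $h_{P_0}(x_0,0,t_0)=\varphi(x_0,t_0)$ by Remark \ref{comparetophiremark}. Finally $h_{P_0}(x_0,0,t_0)=\varphi(x_0,t_0)<u(x_0,0,t_0)$ since $P_0\in\Omega^*$.

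Now the contradiction. Consider $w:=\min\{u^*,h_{P_0}\}$ on the even reflection of $\Theta$ across $\{y=0\}$ — or, staying in $\{y\ge 0\}$, consider $w:=\min\{u,h_{P_0}\}$ on $\Theta\cap\{y\ge 0\}$, noting that $h_{P_0}$ being even in $y$ means its Neumann trace vanishes on $Q_1^*$. Under the contradiction hypothesis, on $\partial_p\Theta\cap\{y\ge 0\}$ we have $h_{P_0}>u$, hence $w=u$ there; and on $Q_1^*\cap\overline\Theta$ we have $w\ge\varphi$ (since $u\ge\varphi$ and $h_{P_0}=\tilde\varphi_{P_0}>\varphi$ except at $P_0$, where $h_{P_0}=\varphi<u$ so $w=\varphi$ only at that single point if at all). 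Since $F$ is convex, $w$ is a viscosity supersolution of $F(D^2w)-w_t\le 0$ in the interior (minimum of two supersolutions — $u$ solves the equation, $h_{P_0}$ is a strict supersolution), $w_y\le 0$ on $Q_1^*$ in the viscosity sense (both $u$ and the even function $h_{P_0}$ satisfy this), $w\ge\varphi$ on $Q_1^*$, and $w\ge u=u_0$ on the relevant outer boundary. Thus $w$ is a viscosity supersolution of problem \eqref{perron_thin_prob} that agrees with $u$ outside $\Theta$, so by the minimality characterization of $u$ we get $u\le w\le h_{P_0}$ throughout $\overline\Theta\cap\{y\ge0\}$. Evaluating at $P_0$ gives $u(x_0,0,t_0)\le h_{P_0}(x_0,0,t_0)=\varphi(x_0,t_0)$, contradicting $P_0\in\Omega^*$.

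The step I expect to be the main obstacle is the gluing argument: verifying rigorously that $w=\min\{u,h_{P_0}\}$ (extended by $u$ outside $\Theta$) is genuinely an admissible supersolution of \eqref{perron_thin_prob} in the sense that makes the minimality of $u$ applicable — in particular that the viscosity Neumann condition $w_y\le 0$ and the obstacle condition $w\ge\varphi$ survive the gluing across $\partial\Theta$ and across $Q_1^*$, and that $w$ has the required upper bound on $u_t$. One must be careful that the competitor is defined on all of $Q_1^+$ (not just $\Theta$) and is continuous; the reflection principle (Proposition \ref{reflection}) is the natural tool for handling the behavior across $Q_1^*$, since $h_{P_0}$ is even in $y$ and $u^*$ is the even reflection of $u$. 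Once admissibility is established, the rest is a one-line evaluation at $P_0$.
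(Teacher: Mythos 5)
Your proof takes a genuinely different route from the paper. The paper's argument is a pure maximum-principle argument: it sets $w := u - h_{P_0}$, notes that $w \in \underline{S}_p(\lambda/n, \Lambda)$ in $Q_1^+$ (because $h_{P_0}$ is a strict classical supersolution of the Pucci-minimal operator), that $w_y = 0$ in the classical sense on $\Omega^*$ (so the even reflection $w^*$ is a subsolution of the Pucci operator in $Q_1 \setminus \Delta^*$ by Proposition \ref{reflection}), and that $w < 0$ on $\Delta^* \cap \{t \le t_0\}$ thanks to Remark \ref{comparetophiremark}. It then applies the maximum principle to $w^*$ on $\Theta \setminus \Delta^*$ and observes that since $w(x_0,0,t_0) > 0$ and $w < 0$ on the excised contact set, the positive supremum on the parabolic boundary must be attained on $\partial_p \Theta \cap \{y \ge 0\}$. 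No appeal to the Perron/minimality characterization of $u$ is needed. Your approach, by contrast, constructs a competitor $\min\{u, h_{P_0}\}$ and invokes the minimality of $u$ among supersolutions of \eqref{perron_thin_prob}. This is a valid general strategy, but it is heavier machinery than the problem requires, and it runs into a genuine technical obstacle that you flag but do not resolve.

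That obstacle is more serious than you indicate. The set $\Theta = \tilde\Theta \times (t_1, t_0]$ is a backward cylinder whose top cap is $\{t = t_0\}$, and $\partial_p \Theta$ does \emph{not} include this cap. Hence the contradiction hypothesis gives $u < h_{P_0}$ only on the lateral and bottom boundaries, not on the top. Defining $w := \min\{u, h_{P_0}\}$ in $\Theta \cap \{y \ge 0\}$ and $w := u$ on the rest of $Q_1^+$ therefore produces a discontinuity along $\tilde\Theta \times \{t_0\} \cap \{y \ge 0\}$ wherever $h_{P_0} < u$ there — and this happens precisely at the point $P_0 = (x_0,0,t_0)$ where you want to evaluate, since $h_{P_0}(x_0,0,t_0) = \varphi(x_0,t_0) < u(x_0,0,t_0)$ because $P_0 \in \Omega^*$. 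As $t \downarrow t_0$, $w$ jumps \emph{upward}, which means $w_t$ is unbounded above across the cap; this is incompatible with the ``$u_t$ locally bounded above'' requirement in the Perron class of \eqref{perron_thin_prob}, so $w$ is not an admissible competitor as written. Extending $h_{P_0}$ (or the $\min$) to $t > t_0$ does not obviously help, because Remark \ref{comparetophiremark} gives $\tilde\varphi_{P_0} > \varphi$ only for $t \le t_0$, so the obstacle constraint $w \ge \varphi$ would be at risk for later times. Unless $t_0 = 0$ (which the application in Lemma \ref{measure} does not guarantee), your construction needs a further repair. The paper's direct maximum-principle argument on $\Theta \setminus \Delta^*$ avoids all of this because it never tries to extend a competitor beyond $\Theta$ and never invokes the Perron characterization.

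Two minor points: your Pucci computation for $h_{P_0}$ is off by the Caffarelli--Cabr\'e normalization factor (the paper uses $\mathcal{M}^+(\cdot, \lambda/n, \Lambda)$, which is where the $n$ in $C_0 > \frac{n}{\lambda}[\Lambda(n-1)+1]$ comes from), and it is worth stating explicitly that you only need $h_{P_0}$ to be a supersolution of the \emph{Pucci-minimal} operator (or of $F$ itself via $F(0)=0$), since that is what makes $u - h_{P_0}$ a Pucci subsolution — which is all that the paper's shorter argument actually uses.
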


\begin{proof}
Let $w:=u-h_{p_0}$ then we have that $w(x_0,0,t_0)=u(x_0,0,t_0)-\varphi(x_0,t_0)>0$, since $(x_0,t_0) \in \Omega^*$. Moreover, $w \in \underline{S}_p\left(\frac{\lambda}{n}, \Lambda \right)$ in $Q_1^+$. Indeed, we note that $\left(h_{P_0}\right)_{ij}=0$ for $i\neq j$, $\left(h_{P_0}\right)_{ii}=2K_0$ for $i <n$, $\left(h_{P_0}\right)_{nn}=-2C_0K_0$ and $\left(h_{P_0}\right)_{t}=-K_0$. Then $\mathcal{M}^+\left(D^2h_{P_0}, \frac{\lambda}{n}, \Lambda \right) -\left(h_{P_0}\right)_{t}<-K_0<0 $ in the classical sense which gives the desired. Finally, $w_y=0$ on $\Omega^*$ in the classical sense. Indeed, it is enough to note that $\left(h_{P_0}\right)_{y}=-\frac{2K_0n^2\Lambda}{\lambda} \ y$, that is $\left(h_{P_0}\right)_{y}=0$ on $Q_1^*$.

Now we denote by $w^*$ the extension of $w$ in $Q_1$ considering its even reflection with respect to $y$ and we have that $w^* \in \underline{S}_p\left(\frac{\lambda}{n}, \Lambda \right)$ in $Q_1 \setminus \Delta^*$ (see Proposition \ref{reflection}). Then maximum principle gives that
$$\sup_{\partial_p \left(\Theta \setminus \Delta^* \right)\cap \{y\geq 0\}} w = \sup_{\partial_p \left(\Theta \setminus \Delta^* \right)} w^*\geq \sup_{\Theta \setminus \Delta^*} w^*\geq w(x_0,0,t_0)>0$$
since $(x_0,t_0) \in \Theta \setminus \Delta^* $. Finally we observe that $\ \partial_p \left(\Theta \setminus \Delta^* \right)\cap \{y\geq 0\} \subset \left( \partial_p \Theta \cap \{y\geq 0\}  \right) \cup \left( \Delta^* \cap \{t\leq t_0\} \right)$. On the other hand, $h_{P_0}=\tilde{\varphi}_{P_0}>\varphi$ on $Q_1^*\cap \{t \leq t_0 \} \setminus \{(x_0,t_0)\}$ from Remark \ref{comparetophiremark} and $\varphi=u$ on $\Delta^*$, that is $w<0$ on $\Delta^* \cap \{t\leq t_0\}$ and the proof is complete. 
\end{proof}

\begin{lemma} \label{measure}
For $\gamma>0$ we define $\Omega_\gamma^* := \{(x,t) \in Q_1^* : \sigma (x,t) > -\gamma\}$. Let $(x_0,t_0) \in \Omega^* \cap Q_{1/2}^*$, then there exist constants $0<\bar{C}<\bar{\bar{C}}<1$ which depend only on $K$, $n, \lambda , \Lambda,\rho$ so that for any $0<\gamma< \frac{1}{2}$ there exists a thin-cylinder $Q^*_{\bar{C}\gamma}(\bar{x},\bar{t})$ so that
$$Q^*_{\bar{C}\gamma}(\bar{x},\bar{t}) \subset Q^*_{\bar{\bar{C}}\gamma}(x_0,t_0) \cap \Omega_\gamma^*.$$
\end{lemma}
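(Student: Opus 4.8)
The plan is to use the barrier function $h_{P_0}$ from Lemma \ref{comparetophi} together with the semi-concavity of $u$ in $y$ (Proposition \ref{semiconcavity}(C)) to produce a point where $u_y^+$ is close to zero, and then to propagate this to a whole thin-cylinder using the $H^\alpha$-regularity of $\sigma$ on $\Omega^*$ (which follows from the interior/boundary estimates of \cite{CM19} applied away from the free boundary, via Lemma \ref{sigmasign} and the preceding discussion). First I would fix $(x_0,t_0) \in \Omega^* \cap Q_{1/2}^*$ and apply Lemma \ref{comparetophi} with a suitable choice of $\tilde\Theta$ and $t_1$, namely a half-cylinder $Q_{c\gamma}^+(x_0,t_0)$ of radius proportional to $\gamma$, so that $\sup_{\partial_p\Theta \cap \{y\geq 0\}}(u-h_{P_0}) \geq 0$. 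Since $u \leq h_{P_0} + h_{P_0}(P_0) - u(P_0) \cdot(\text{something})$ is not automatic, the point is that the supremum of $u - h_{P_0}$ on the flat part $Q_{c\gamma}^*(x_0,t_0)$ cannot be too negative compared with the value at $P_0$ (which is strictly positive), while on the lateral and bottom boundary the quadratic terms in $h_{P_0}$ of size $\gamma^2$ control the difference. Combining this with the Lipschitz bound Proposition \ref{semiconcavity}(A), one locates a point $(\bar x,\bar t) \in Q_{\bar{\bar C}\gamma}^*(x_0,t_0)$ at which $u(\bar x,0,\bar t) - h_{P_0}(\bar x,\bar t)$ is comparable to its value at $P_0$, i.e. $u$ stays above the obstacle's ``first-order expansion plus a $\gamma^2$-paraboloid'' there.

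The next step is to convert this pointwise lower bound on $u - h_{P_0}$ into an upper bound (in absolute value) on $\sigma(\bar x,\bar t)$. Here I would compare $u$ with the test function $h_{P_0}$ shifted so that it touches $u$ from below at $(\bar x, 0,\bar t)$ on a half-cylinder of radius $\sim \gamma$: since $F(D^2 h_{P_0}) - (h_{P_0})_t < 0$ in the classical sense (shown in the proof of Lemma \ref{comparetophi}) and $(h_{P_0})_y = 0$ on $Q_1^*$, the function $u$ minus such a shifted barrier is in $\underline S_p$, and a comparison on the half-cylinder, together with the fact that the touching occurs at an interior point of the flat face, forces $u_y^+(\bar x,\bar t) = \sigma(\bar x,\bar t) \geq (h_{P_0})_y(\bar x,0,\bar t) - (\text{error}) = -o(1)$ as $\gamma\to 0$; rescaling the barrier by $\gamma$ one sees the error is $O(\gamma)$, hence $\sigma(\bar x,\bar t) \geq -\gamma/2$, say, so $(\bar x,\bar t)\in\Omega_\gamma^*$. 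Actually the cleanest route is: use semi-concavity $u_{yy}\leq C$ to write, for $y$ of order $\gamma$, $u(\bar x, y, \bar t) \leq u(\bar x,0,\bar t) + \sigma(\bar x,\bar t)\,y + \tfrac{C}{2}y^2$, and compare this with the lower bound on $u$ coming from $h_{P_0}$ at height $y\sim\gamma$; since $h_{P_0}$ has the term $-C_0K_0 y^2$ of size $\gamma^2$, solving the resulting inequality for $\sigma(\bar x,\bar t)$ gives $\sigma(\bar x,\bar t) \geq -C\gamma$ for a universal $C$, and choosing the radius $c$ small enough makes this $\geq -\gamma$.

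Finally, having one point $(\bar x,\bar t) \in Q_{\bar{\bar C}\gamma}^*(x_0,t_0) \cap \Omega_\gamma^*$ with a quantitative margin, say $\sigma(\bar x,\bar t) > -\gamma/2$, I would invoke the $H^\alpha$-estimate for $\sigma$ valid in a universal neighborhood of points of $\Omega^*$ (the estimate underlying Lemma \ref{sigmaregularity}, available here since $(\bar x,\bar t)$ is in the non-contact set, where the Neumann condition holds classically): $|\sigma(x,t) - \sigma(\bar x,\bar t)| \leq C_\sigma\, p((x,t),(\bar x,\bar t))^\alpha$ on a small parabolic ball around $(\bar x,\bar t)$. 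Hence $\sigma(x,t) > -\gamma/2 - C_\sigma r^\alpha > -\gamma$ as soon as $r^\alpha < \gamma/(2C_\sigma)$, i.e. $r < (\gamma/(2C_\sigma))^{1/\alpha}$. Since $\alpha<1$ we have $(\gamma/(2C_\sigma))^{1/\alpha} \gtrsim \gamma$ only if we are careful; to keep the radius linear in $\gamma$ one instead improves the margin at $(\bar x,\bar t)$ to be of order $\gamma$ times a constant and uses that the Hölder seminorm of $\sigma$ degenerates like $\gamma^{-\alpha}$ on balls of radius $\gamma$ — which it does not, so the honest statement is that one first fixes a universal $H^\alpha$ bound on a fixed ball and then takes $\bar C$ small. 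This is the main obstacle: arranging that the thin-cylinder $Q_{\bar C\gamma}^*(\bar x,\bar t)$ on which $\sigma > -\gamma$ has radius genuinely proportional to $\gamma$ (not $\gamma^{1/\alpha}$), which forces the comparison in the previous step to deliver $\sigma(\bar x,\bar t) \geq -\varepsilon_0\gamma$ with $\varepsilon_0$ as small as we wish and forces the Hölder estimate to be applied at the natural scale $\gamma$ after rescaling. Once the scaling bookkeeping is done and $\bar C < \bar{\bar C} < 1$ are fixed universally, the inclusion $Q^*_{\bar C\gamma}(\bar x,\bar t) \subset Q^*_{\bar{\bar C}\gamma}(x_0,t_0) \cap \Omega_\gamma^*$ follows, completing the proof.
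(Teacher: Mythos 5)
Your opening step (applying Lemma \ref{comparetophi} on a domain of size $\sim\gamma$) and the idea of exploiting the semi-concavity $u_{yy}\leq C$ are both on the right track, but the way you propagate the pointwise information to a thin-cylinder has a genuine gap, and you in fact flag it yourself (``this is the main obstacle'') without resolving it. Invoking an $H^\alpha$ estimate for $\sigma$ around non-contact points is circular here: no such estimate is available at this stage. The $H^{1+\alpha}$ Neumann estimates in the interior of $\Omega^*$ have constants that degenerate as one approaches the free boundary $\Gamma$, and a \emph{universal} Hölder estimate for $\sigma$ near $\Gamma$ is precisely what Lemma \ref{sigmaregularity} aims to establish -- and Lemma \ref{measure} is one of its ingredients. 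Moreover, even granting such an estimate, the scaling arithmetic you write out is correct and kills the approach: a margin $\sigma(\bar x,\bar t)>-\varepsilon_0\gamma$ together with $[\sigma]_\alpha\leq C_\sigma$ only yields $\sigma>-\gamma$ on a cylinder of radius $\sim\gamma^{1/\alpha}$, not $\sim\gamma$, since $\alpha<1$. Saying ``once the scaling bookkeeping is done'' does not fix this; there is no bookkeeping that makes it work. A smaller confusion: the supremum in Lemma \ref{comparetophi} is taken over $\partial_p\Theta\cap\{y\geq 0\}$, which does \emph{not} include the interior flat face; the point $P_1$ it produces generally sits at height $y_1>0$ or on the lateral/bottom boundary, and you cannot assume it lies on $Q_1^*$.

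The paper's route sidesteps Hölder regularity of $\sigma$ entirely. After Lemma \ref{comparetophi} produces $P_1=(x_1,y_1,t_1)\in\partial_p\Theta\cap\{y\geq 0\}$ with $u(P_1)-\varphi(x_1,t_1)\gtrsim\gamma^2$ (or $\gtrsim-C_2\gamma^2$ if $y_1=C_2\gamma$), this bound is transferred to all $(x_2,t_2)$ in a thin-cylinder of radius $\sim\gamma$ around $(x_1,t_1)$ -- at the fixed height $y=y_1$ -- using only the second-order bounds of Proposition \ref{semiconcavity}(B): integrating $u_{\tau\tau}\geq-C$ twice in $x$ and $u_t\geq-C$ once in $t$ costs only $O(\gamma^2)$, and restricting to the ``half'' where $(x_2-x_1)\cdot D_{n-1}(u-\varphi)(P_1)\geq 0$ makes the first-order term harmless. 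Then at each such $(x_2,t_2)$ one argues by contradiction: if $\sigma(x_2,t_2)\leq-\gamma$, then $(x_2,t_2)\in\Delta^*$ so $u(x_2,0,t_2)=\varphi(x_2,t_2)$, and integrating $u_{yy}\leq C$ from $0$ to $y_1\leq C_2\gamma$ gives $u(x_2,y_1,t_2)-\varphi(x_2,t_2)\leq Cy_1^2-\gamma y_1<0$ for $C_2$ small, contradicting the transferred lower bound. This makes the entire half thin-cylinder lie in $\Omega^*_\gamma$, at genuine scale $\gamma$ and with universal constants, and a thin-cylinder $Q^*_{\bar C\gamma}(\bar x,\bar t)$ is then chosen inside. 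The key point you are missing is that the semi-convexity bounds (B) are the correct tool for the lateral/temporal propagation, not any modulus of continuity for $\sigma$, since they cost only $\gamma^2$ over a region of size $\gamma$ -- exactly matching the $\gamma^2$ gain from the barrier.
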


\begin{proof}
Let $(x_0,t_0) \in \Omega^* \cap Q_{1/2}^*$, we apply Lemma \ref{comparetophi} with 
$$\Theta:=B^*_{C_1\gamma}(x_0) \times (-C_2\gamma, C_2 \gamma) \times \left( t_0-(C_1\gamma )^2,t_0\right]$$
where $0<C_2<<C_1<<1$ to be chosen. Then there exists $P_1=(x_1,y_1,t_1) \in \partial_p\Theta \cap \{y \geq 0\}$ so that 
\begin{equation} \label{measure1}
u(P_1)-h_{P_0}(P_1) \geq 0.
\end{equation}

We split into two cases.
\\ \underline{\textit{Case 1.}} If $\ |x_1-x_0|=C_1\gamma \ $ or $\ t_1=t_0-(C_1\gamma )^2$. Then using (\ref{measure1}) and Remark \ref{comparetophiremark} we have in the first occasion that
\begin{align*}
u(P_1)&\geq \varphi (x_0,t_0)+D\varphi (x_0,t_0) \cdot (x_1-x_0)-K_0(t_1-t_0) + \frac{K}{2}|x_1-x_0|^2 \\
&\ \ \ +\frac{K}{2}|x_1-x_0|^2 - \frac{Kn^2\Lambda}{\lambda}y_1^2 \\
&\geq \varphi(x_1,t_1) +\frac{K}{2}(C_1\gamma )^2 - \frac{Kn^2\Lambda}{\lambda}(C_2\gamma )^2
\end{align*}
and similarly in the second occasion that $u(P_1) \geq \varphi(x_1,t_1) +\frac{K_0}{2}(C_1\gamma )^2 - \frac{K_0n^2\Lambda}{\lambda}(C_2\gamma )^2$.

Thus in any case
\begin{equation} \label{measure2}
u(x_1,y_1,t_1)\geq \varphi(x_1,t_1)+C_4\gamma^2 
\end{equation}
where $C_4>0$ a constant depending only on universal constants and on $C_1, C_2$ (choosing $0<C_2 < \sqrt{\frac{\lambda}{2n^2\Lambda}} \ C_1$).

Now take any $(x_2,t_2) \in Q^*_{C_3 \gamma}(x_1,t_1)$, for $C_3$ to be chosen. We intend to transfer the information (\ref{measure2}) from $(x_1,y_1,t_1)$ to $(x_2,t_2)$ through integration and using the bounds of Proposition \ref{semiconcavity} for suitable derivatives. We denote by $\tau=\frac{x_2-x_1}{|x_2-x_1|} \in \R^{n-1}$ and we assume that $(x_2-x_1) \cdot D_{n-1}(u-\varphi)(P_1) \geq 0$ (considering the extension of $\varphi$ in $Q_1^+$ where $\varphi^*(x,y,t)=\varphi(x,y)$). We notice that
\begin{align*}
&\int_{0}^{|x_2-x_1|} \int_0^e (u-\varphi)_{\tau \tau}(x_1+\tau h,y_1,t_1) \ dh de \\
&\ \ \ \ \ \ \ \ = (u-\varphi)(x_2,y_1,t_1)-(u-\varphi)(x_1,y_1,t_1) - |x_2-x_1| (u-\varphi)_{\tau}(x_1,y_1,t_1)
\end{align*}
and
\begin{align*}
&\int_{t_2}^{t_1}  (u-\varphi)_{t}(x_2,y_1,h) \ dh  = (u-\varphi)(x_2,y_1,t_1)-(u-\varphi)(x_2,y_1,t_2).
\end{align*}
Combining the above we get
\begin{align} \label{measure3}
&\int_{0}^{|x_2-x_1|} \int_0^e (u-\varphi)_{\tau \tau}(x_1+\tau h,y_1,t_1) \ dh de - \int_{t_2}^{t_1}  (u-\varphi)_{t}(x_2,y_1,h) \ dh \nonumber \\
&\ \ \ = (u-\varphi)(x_2,y_1,t_2)-(u-\varphi)(x_1,y_1,t_1) - |x_2-x_1| (u-\varphi)_{\tau}(x_1,y_1,t_1).
\end{align}
On the other hand using \textit{(B)} of Proposition \ref{semiconcavity} we have
\begin{align*}
\int_{0}^{|x_2-x_1|} \int_0^e (u-\varphi)_{\tau \tau}(x_1+\tau h,y_1,t_1) \ dh de \geq -C |x_2-x_1|^2 \geq -C (C_3\gamma)^2
\end{align*}
and
\begin{align*}
- \int_{t_2}^{t_1}  (u-\varphi)_{t}(x_2,y_1,h) \ dh \geq -C (t_1-t_2) \geq -C (C_3\gamma)^2.
\end{align*}
Therefore returning to (\ref{measure3}) we have that
\begin{align} \label{measure4}
(u-\varphi)(x_2,y_1,t_2) &\geq (u-\varphi)(x_1,y_1,t_1) + (x_2-x_1) \cdot D_{n-1}(u-\varphi)(x_1,y_1,t_1) -C (C_3\gamma)^2 \nonumber \\
&\geq C_4\gamma^2-C (C_3\gamma)^2>0
\end{align}
by choosing $0<C_3^2<\frac{C_4}{C}$.

Now (to get a contradiction) we assume that $(x_2,t_2) \notin \Omega_\gamma^*$, that is $\sigma(x_2,t_2)\leq -\gamma<0$. Then $(x_2,t_2) \in \Delta^*$, that is $u(x_2,0,t_2)=\varphi(x_2,t_2)$. Similarly as before we want to transfer this information from $(x_2,0,t_2)$ to $(x_2,y_1,t_2)$ via integration of $u_{yy}$ and using \textit{(C)} of Proposition \ref{semiconcavity}. We have
\begin{align*}
Cy_1^2 \geq \int_{0}^{y_1} \int_0^e u_{yy}(x_2,h,t_2) \ dh de=u(x_2,y_1,t_2)-u(x_2,0,t_2)-y_1 \sigma(x_2,t_2)
\end{align*}
then, $u(x_2,y_1,t_2)-\varphi(x_2,t_2)\leq Cy_1^2+y_1(-\gamma)\leq y_1 \gamma (CC_2-1)<0$, choosing $0<C_2\leq \frac{1}{C}$. This is a contradiction regarding (\ref{measure4}).

\hspace{-7mm} \underline{\textit{Case 2.}} If $\ y_1=C_2\gamma  $. Then using (\ref{measure1}) and Remark \ref{comparetophiremark} we have 
\begin{equation} \label{measure5}
u(x_1,y_1,t_1)\geq \varphi(x_1,t_1) -\frac{K_0n^2\Lambda}{\lambda}C_2^2\gamma^2. 
\end{equation}

We take any $(x_2,t_2) \in Q^*_{C_2 \gamma}(x_1,t_1)$. Assuming that $(x_2-x_1) \cdot D_{n-1}(u-\varphi)(P_1) \geq 0$ we can repeat the computations of Case 1 slightly modified to obtain
\begin{align} \label{measure6}
(u-\varphi)(x_2,C_2\gamma,t_2) \geq -CC_2^2\gamma^2>-C_6C_2 \gamma^2
\end{align}
where $0<C_6<CC_2$.

Now (to get a contradiction) we assume that $\sigma(x_2,t_2)\leq -\gamma<0$. Then $u(x_2,0,t_2)=\varphi(x_2,t_2)$. Similarly as in Case 1 we get that $u(x_2,C_2\gamma,t_2)-\varphi(x_2,t_2)\leq C_2\gamma^2(CC_2-1)<-C_6C_2 \gamma^2$, choosing $0<C_6<1-CC_2$ and $C_2$ small enough. This is a contradiction regarding (\ref{measure6}).

In any case we have that there exists $0<C_7<<1$ depending only on $\rho, n, \lambda , \Lambda$, $K$ so that if $(x_2,t_2) \in Q^*_{C_7 \gamma}(x_1,t_1)$ with $(x_2-x_1) \cdot D_{n-1}(u-\varphi)(x_1,y_1,t_1) \geq 0$ (which roughly speaking holds at least in the "half" of $Q^*_{C_7 \gamma}(x_1,t_1)$)  then $(x_2,t_2) \in \Omega^*_\gamma$. Moreover choosing $1>\bar{\bar{C}}>C_7+C_1$ it is easy to check that $Q^*_{C_7 \gamma}(x_1,t_1) \subset Q^*_{\bar{\bar{C}}\gamma}(x_0,t_0)$. By choosing a thin cylinder $Q^*_{\bar{C}\gamma}(\bar{x},\bar{t})$ inside $Q^*_{C_7 \gamma}(x_1,t_1) \cap \{(x_2-x_1) \cdot D_{n-1}(u-\varphi)(x_1,y_1,t_1) \geq 0\}$ the proof is complete.
\end{proof}

Now maximum principle and a barrier argument give the following important property.

\begin{lemma} \label{Harnack_type}
Consider the set $K_1:=B^*_1 \times (0,1) \times (-1,0]$ and assume that $w \in C\left(K_1\right)$ satisfies in the viscosity sense
\begin{align*}.
\begin{cases}
\mathcal{M}^- \left( D^2w,\lambda, \Lambda \right)-w_t \leq 0, &\ \ \text{ in } \ \ K_1 \\
w \geq 0, &\ \ \text{ in } \ \ K_1. \end{cases}
\end{align*}
Suppose that there exists some neighborhood $Q^*_\delta(\bar{x},\bar{t}) \subset Q_1^*$ so that
$$\liminf_{y\to 0^+}w(x,y,t) \geq 1, \ \ \text{ for any } \ \ (x,t) \in  \overline{Q}^*_\delta(\bar{x},\bar{t}).$$
Then, there exists $\varepsilon=\varepsilon (\delta,n,\lambda ,\Lambda)>0$ so that
$$w(x,y,t) \geq \varepsilon, \ \ \text{ for any } \ \ (x,y,t) \in \overline{B}^*_{1/2}\times \left[\frac{1}{4},\frac{3}{4}\right]\times \left[-\frac{\delta^2}{2},0\right].$$
\end{lemma}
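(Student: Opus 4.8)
The plan is to prove this ``Harnack-type'' propagation statement by a barrier construction: we build an explicit subsolution that sits below $w$ on the parabolic boundary of a suitable subcylinder and whose value on the target set $\overline{B}^*_{1/2}\times[1/4,3/4]\times[-\delta^2/2,0]$ is bounded below by a positive constant depending only on $\delta,n,\lambda,\Lambda$. Since $w\in\underline{S}_p(\lambda,\Lambda)$ and $w\ge 0$, comparison with $\mathcal{M}^-$-supersolutions (barriers) will do the job. The key input is the hypothesis $\liminf_{y\to 0^+}w(x,y,t)\ge 1$ on $\overline{Q}^*_\delta(\bar x,\bar t)$, which lets us assert, after shrinking, that $w\ge 1/2$ on a genuine half-cylinder $B^*_{\delta/2}(\bar x)\times(0,\eta)\times(-\delta^2/4,0]$ for some small $\eta>0$ depending on $w$ a priori --- but crucially the barrier we construct should only use that $w\ge 1/2$ on a thin slab near $y=0$ over $Q^*_{\delta/2}(\bar x,\bar t)$, so the resulting $\varepsilon$ does not depend on $\eta$.

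First I would fix a small sub-box $D:=B^*_{\delta/2}(\bar x)\times(0,\eta)\times(-\delta^2/4,0]$ where $w\ge 1/2$; this is legitimate because the $\liminf$ condition together with continuity of $w$ in $K_1\setminus Q_1^*$ forces $w>1/2$ on a one-sided neighborhood of $\overline{Q}^*_{\delta/2}(\bar x,\bar t)$ (one argues pointwise: for each such $(x,t)$ there is $y_{x,t}>0$ with $w(\cdot)>1/2$ on a ball around $(x,y_{x,t},t)$; a compactness/covering argument on the compact set $\overline{Q}^*_{\delta/2}$ produces a uniform $\eta$). Next I construct a barrier $b$ on the cylinder $\Sigma:=B^*_{3/4}\times(0,1)\times(-\delta^2/2,0]$ (or a convenient variant): take $b$ to be the $\mathcal{M}^-$-harmonic function, i.e. the viscosity solution of $\mathcal{M}^-(D^2b,\lambda,\Lambda)-b_t=0$ in $\Sigma$ with boundary data $b=\tfrac12$ on the ``bottom slab'' $\overline{Q}^*_{\delta/2}(\bar x,\bar t)\cap\partial_p\Sigma$-side and $b=0$ on the remaining parabolic boundary, extended appropriately and reflected evenly in $y$ if needed so the lateral Neumann-type behaviour is harmless. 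Alternatively, and more robustly, one can write down an explicit elementary subsolution of Pucci's minimal operator (a product of a spatial bump supported near $\bar x$ decaying in $y$, times an exponential in $t$) with the required sign of $\mathcal{M}^--\partial_t$; either way, on $\partial_p\Sigma$ we have $b\le w$ because $w\ge 0$ everywhere and $w\ge 1/2\ge b$ on the part of $\partial_p\Sigma$ lying in the slab $D$. Then $w-b\in\underline{S}_p(\lambda,\Lambda)$ (difference of a subsolution of $\mathcal{M}^-$ and a solution), $w-b\ge 0$ on $\partial_p\Sigma$, hence by the maximum principle $w\ge b$ in $\Sigma$.

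The final step is purely a statement about the barrier itself: by the strong maximum principle / Harnack inequality for $\mathcal{M}^-$ (equivalently by the explicit form of the chosen subsolution), $b$ is strictly positive on any compact subset of $\Sigma$ that can be connected to the slab $D$ through the parabolic interior while staying away from the zero-data portion of $\partial_p\Sigma$; in particular $b\ge\varepsilon$ on $\overline{B}^*_{1/2}\times[1/4,3/4]\times[-\delta^2/2,0]$ with $\varepsilon=\varepsilon(\delta,n,\lambda,\Lambda)>0$, since this set is a fixed compact subset of $\Sigma$ and the data of $b$ is fixed (it depends on $\delta$ only through the size $\delta/2$ of the slab and through the time-extent $\delta^2/2$). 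Combining $w\ge b\ge\varepsilon$ on the target set finishes the proof. The main obstacle is the first step: producing the uniform slab $\eta$ on which $w\ge 1/2$, and then designing the barrier so that its positive lower bound $\varepsilon$ is genuinely independent of $\eta$ (only the slab width $\delta/2$ in $x$ and the time window should enter) --- this forces the boundary data of $b$ to be supported on the thin slab and to be ``turned on'' for the whole time interval $(-\delta^2/4,0]$, so that parabolic propagation carries the positive mass up to height $y\sim 1/2$ and across $|x|\le 1/2$ within the available time; checking that $[-\delta^2/2,0]$ is long enough for this propagation (it is, up to adjusting $\varepsilon$) is the only quantitative point that needs care, and it is handled by the interior Harnack inequality for $\mathcal{M}^-$ applied along a Harnack chain.
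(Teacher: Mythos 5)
Your overall strategy (construct a barrier, compare, then show the barrier is strictly positive on the target set) is the same as the paper's, but your first step contains a genuine gap. You claim that $\liminf_{y\to 0^+} w(x,y,t)\ge 1$ on $\overline Q^*_\delta(\bar x,\bar t)$, together with continuity of $w$ in $K_1$ and a covering argument, yields a uniform slab $B^*_{\delta/2}(\bar x)\times(0,\eta)\times(-\delta^2/4,0]$ on which $w\ge 1/2$. This does not follow: $w$ is only continuous in the open set $K_1$, so each ball on which $w>1/2$ is an interior ball at some height $y_i>0$ and does not reach $y=0$; a finite union of such balls need not contain any slab $\{0<y<\eta\}$. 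Equivalently, the function $(x,t)\mapsto \sup\{\eta:\, w(x,\cdot,t)>1/2 \text{ on }(0,\eta)\}$ has no reason to be lower semicontinuous, and its infimum over the compact set $\overline Q^*_{\delta/2}(\bar x,\bar t)$ can very well be zero. The paper sidesteps this entirely: the barrier $b_{P'}$ is solved in all of $K_1$ with boundary data prescribed directly on the thin face $\{y=0\}$, and the comparison is carried out for $v=w-b_{P'}\in\overline S_p(\lambda/n,\Lambda)$ using the hypothesis \emph{in the $\liminf$ form}, i.e. $\liminf_{y\to 0^+}v\ge 0$ on the thin face (and $v\ge 0$ on the rest of $\partial_p K_1$ because $w\ge 0$ and $b_{P'}=0$ there). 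No uniform slab is needed, and the maximum principle for $\overline S_p$ accommodates $\liminf$-type boundary inequalities.

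Two further points you should not gloss over. First, your proposed cylinder $\Sigma=B^*_{3/4}\times(0,1)\times(-\delta^2/2,0]$ has initial time exactly $t=-\delta^2/2$, which is also the earliest time of the target set; a barrier with zero initial data cannot be positive there, so the time interval must be taken longer (the paper uses all of $(-1,0]$, with the wedge data supported in $(\bar t-\delta^2,\bar t]$, which leaves room for parabolic propagation to the target). Second, you assert $\varepsilon=\varepsilon(\delta,n,\lambda,\Lambda)$ independently of the location $(\bar x,\bar t)$ of the slab, but the barrier and hence its minimum over the fixed target set $\overline B^*_{1/2}\times[1/4,3/4]\times[-\delta^2/2,0]$ do depend on where the positive data sits; the paper proves this uniformity explicitly, by setting $\tilde\varepsilon:=\inf_{P'\in\overline Q^*_{1-\delta}}\min_{K_2}b_{P'}$ and showing $\tilde\varepsilon>0$ through a compactness/uniform-convergence argument for the family $\{b_{P'}\}$. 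That step is essential for the later iteration and must be proved, not asserted.
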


\begin{proof}
For any $P'=(x',t') \in \overline{Q}_{1-\delta}^*$ we define the auxiliary function
\begin{align*}
\begin{cases}
\mathcal{M}^-\left(D^2b_{P'}, \frac{\lambda}{n}, \Lambda \right) -\left(b_{P'}\right)_t= 0, &\ \text{ in } \ \ K_1 \\
b_{P'}=0, &\ \text{ on } \ \ \partial_p K_1 \setminus Q^*_{\delta}(P') \\
b_{P'}(x,0,t) = 1-\frac{1}{\delta}  \max\{|x-x'|,\sqrt{2}|t-t''|^{\frac{1}{2}}\}, &\ \text{ on } \ \  Q^*_{\delta}(P')
\end{cases}
\end{align*}
where $t'':=t'-\frac{\delta^2}{2}$. Applying regularity results for Dirichlet-type boundary value problems (see \cite{Wang2}) we have that $b_{P'}$ is Lipschitz in $\overline{K}_1$ with the corresponding constant depending only on $\delta$ and universal quantities (but not on $P'$).

We claim that 
\begin{equation} \label{Harnack_type1}
b_{P'} > 0, \ \ \text{ in } \ \ \overline{B}^*_{1/2}\times \left[\frac{1}{4},\frac{3}{4}\right]\times \left[-\frac{\delta^2}{2},0\right]:=K_2.
\end{equation}
Indeed, note first that $b_{P'} \geq 0$ on $\partial_pK_1$, thus by maximum principle $b_{P'} \geq 0$ in $\overline{K}_1$. We suppose that there exists some $(x_1,y_1,t_1) \in K_2$ with $b_{P'}(x_1,y_1,t_1)=0$ which means that $b_{P'}$ attains its minimum over $\overline{K}_1$ at $(x_1,y_1,t_1)$. Then strong maximum principle gives that $b_{P'} = 0$ on $\overline{K}_1 \cap \{t \leq t_1\}.$ Note that $t_1 \geq -\frac{\delta^2}{2} >t'-\delta^2$ then there exists $(x,t) \in$ $Q^*_{\delta}(P')$ such that $t<t_1$, that is $b_{P'}(x,0,t) >0$ and $t<t_1$ which is a contradiction.

Now let $\ \varepsilon (P',\delta,n,\lambda,\Lambda):= \min_{K_2} b_{P'}>0 \ $ and 
$$\tilde{\varepsilon} (\delta,n,\lambda,\Lambda):=\inf_{P' \in Q^*_{1-\delta}} \varepsilon (P',\delta,n,\lambda,\Lambda) \geq 0.$$
We want to show that $\tilde{\varepsilon}>0$. We assume that $\tilde{\varepsilon}=0$, then there exists $\{P'_j:=(x'_j,t'_j)\}_{j \in \N} \subset Q^*_{1-\delta}$ so that $\varepsilon (P'_j,\delta,n,\lambda,\Lambda) \to 0$ as $j \to \infty$. Also for any $j \in \N$ there exists $(X_j,t_j) \in K_2$ so that $\varepsilon (P'_j,\delta,n,\lambda,\Lambda)=b_{P'_j}(X_j,t_j)$. We notice also that $\{P'_j\}, \ \{ (X_j,t_j) \}$ are both bounded sequences and therefore there exist convergent subsequences (for which we use the same indices for simplicity). That is
$$P'_j \to P_\infty ' \in \overline{Q}_{1-\delta}^*, \ \ (X_j,t_j) \to (X_\infty,t_\infty), \ \ \text{ as } \ j \to \infty.$$

On the other hand $b_{P'_j}$ are equicontinuous and uniformly bounded in $\overline{K}_1$, thus there exist a uniformly convergent subsequence in $\overline{K}_1$, that is $b_{P'_j} \to b_{\infty}$ uniformly in $\overline{K}_1$ as $j \to \infty$. To get the contradiction it is enough to show that
\begin{equation} \label{Harnack_type2}
b_\infty=b_{P'_\infty}, \ \ \text{ in } \ \ \overline{K}_1.
\end{equation} 
Indeed, if (\ref{Harnack_type2}) holds then by uniform convergence we have that $b_{P'_j}(X_j,t_j) \to b_{P'_\infty}(X_\infty,t_\infty)$ as $j \to \infty$, thus $b_{P'_\infty}(X_\infty,t_\infty)=0$ which contradicts (\ref{Harnack_type1}) since $(X_\infty,t_\infty) \in K_2$. Now to obtain (\ref{Harnack_type2}), using uniqueness, it is enough to show that $b_\infty$ solves the same Dirichlet problem as $b_{P'_\infty}$ in $\overline{K}_1$. From closedness of viscosity solutions we know that $\mathcal{M}^-\left(D^2b_{\infty}, \frac{\lambda}{n}, \Lambda \right) -\left(b_{\infty}\right)_t= 0$ in $K_1$. Also $b_{\infty}=0$ on $\partial_p K_1 \setminus Q^*_1$. Thus it remains to check the following two
\begin{enumerate}
\item $b_\infty(x,0,t) = 1-\frac{1}{\delta}  \max\{|x-x'_\infty|,\sqrt{2}|t-t''_\infty|^{\frac{1}{2}}\} \ $ on $\ \overline{Q}^*_{\delta}(P_\infty')$
\item $b_{\infty}=0\ $ on $\ Q^*_1 \setminus \overline{Q}^*_{\delta}(P'_\infty)$.
\end{enumerate}
For $(x,t)$ such that $|x-x_\infty'|<\delta$ and $|t-t_\infty''|<\frac{\delta^2}{2}$ we can choose an integer $m=m(x,t,\delta) > \frac{3}{2\delta}$ so that $(x,t) \in Q^*_{\delta-\frac{1}{m}} \left(  P_\infty' \right)$. Also there exists integer $N=N(x,t,\delta) \in \N$ so that for any $j \geq N$, $|x_j'-x_\infty'|<\frac{1}{m}$ and $|t_j''-t_\infty''|<\frac{1}{m^2}.$ Then for any $j \geq N$, using that $m>\frac{3}{2\delta}$, we have that $(x,t) \in \overline{Q}^*_{\delta}(P_j')$, that is $b_{P_j'}(x,0,t) = 1-\frac{1}{\delta}  \max\{|x-x'_j|,\sqrt{2}|t-t''_j|^{\frac{1}{2}}\}$ and taking $j \to \infty$ we obtain (1) at $(x,t)$. Note that for $(x,t)$ such that $|x-x_\infty'|=\delta$ or $t_\infty'-\delta^2=t$ or $t=t_\infty'$ we use the continuity of $b_\infty$. Finally for $(x,t) \in  Q^*_1 \setminus \overline{Q}^*_{\delta}(P'_\infty)$,  we follow a similar argument as before by choosing $m=m(x,t,\delta) > \frac{1}{\delta}$ so that $(x,t) \in Q^*_1 \setminus \overline{Q}^*_{\delta+\frac{1}{m}} \left(  P_\infty' \right)$.  

Now if $\bar{P}=(\bar{x},\bar{t})$ the given point we have that $b_{\bar{P}}\geq \tilde{\varepsilon}$ in $K_2$. We use maximum principle to get this bound for $w$ as well. So let $v=w-b_{\bar{P}}$ then $v \in \overline{S}_p\left(\frac{\lambda}{n},\Lambda\right)$ in $K_1$. Moreover if $(x,t) \in Q^*_\delta(\bar{P})$ then $\liminf_{y \to 0^+} v(x,y,t) \geq 1-b_{\bar{P}}(x,0,t)\geq 0$ and if $(x,t) \in \partial_pK_1 \setminus Q^*_\delta(\bar{P})$ then $\liminf_{y \to 0^+} v(x,y,t) \geq  0$ since $w \geq 0$. Therefore, $w \geq b_{\bar{P}} \geq \tilde{\varepsilon}$ in $K_2$.
\end{proof}

The next lemma is a consequence of an iterative argument.

\begin{lemma}\label{sigmaregularity}
Let $(x_0,t_0) \in \Omega^* \cap Q_{1/2}^*$, then there exists  universal constants $0<\alpha<1$, $C>0$ so that
$$0\geq \sigma (x,t) \geq -C \left( |x-x_0|+|t-t_0|^{1/2}\right)^\alpha, \ \ \text{ for any } \ (x,t) \in Q_{1/2}^*(x_0,t_0).$$
\end{lemma}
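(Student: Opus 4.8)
The plan is to run an oscillation–decay iteration. Since $\sigma(x_0,t_0)=0$ and $\sigma\le 0$ on $Q_1^*$ by Lemma~\ref{sigmasign}, it is enough to control the non-decreasing quantity $m(r):=\sup_{Q_r^*(x_0,t_0)}(-\sigma)$, which by Proposition~\ref{semiconcavity}(A) is bounded by a universal constant. The heart of the argument is the claim that there exist universal constants $\nu,c\in(0,1)$, $\kappa>0$ and $r_0\in(0,\tfrac12]$ such that
$$m(r)\ge\kappa r\ \Longrightarrow\ m(\nu r)\le(1-c)\,m(r)\qquad\text{for every }0<r\le r_0 .$$
Granting this, a classical iteration on the scales $\nu^j r$ — using the monotonicity of $m$ to absorb those scales at which $m(r)<\kappa r$ — produces $m(r)\le C\,r^{\alpha}$ with $\alpha:=\min\{1,\log(1-c)/\log(1/\nu)\}$ and $C$ universal, which is exactly the assertion of the lemma (with the parabolic distance $\le|x-x_0|+|t-t_0|^{1/2}$).

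To establish the claim, fix $r\le r_0$ with $M:=m(r)\ge\kappa r$ and rescale around $(x_0,t_0)$ to the unit scale: the $y$-derivative, the structural hypotheses on $F$ and $\varphi$, and the Lipschitz bound of Proposition~\ref{semiconcavity}(A) pass to the rescaled solution $\tilde u$ with universal constants, while the \emph{second}-order bounds of Proposition~\ref{semiconcavity}(B)--(C), transported by scaling from $u$, hold on $Q_1^+$ with an extra factor $r$; in particular $\tilde u_y(x,y,t)\le\tilde\sigma(x,t)+(rC_0)\,y$, and since $rC_0\le (C_0/\kappa)\,M$ we may make the semiconcavity coefficient as small a multiple of $M=\sup_{Q_1^*}(-\tilde\sigma)$ as we wish by taking $\kappa$ large. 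Applying Lemma~\ref{measure} to $\tilde u$ at the non-contact point $(0,0)\in\Omega^*\cap Q_{1/2}^*$ with $\gamma:=\kappa r/2$ produces a thin cylinder on which $\tilde\sigma>-\gamma\ge-M/2$; the choice $\gamma=\kappa r/2$ is made precisely so that, \emph{in the rescaled picture}, this thin cylinder has the universal radius $\delta_0:=\bar C\kappa/2$ and lies within universal distance of the origin, so $M+\tilde\sigma>M/2$ there while $M+\tilde\sigma\ge0$ on all of $Q_1^*$. I would then feed into Lemma~\ref{Harnack_type} a nonnegative supersolution of the Pucci minimal operator, built near this thin cylinder from a suitable normalization of $M+\tilde u_y$ — or of the thickened-obstacle gap $u^*-\tilde\varphi\ge0$, which by Proposition~\ref{reflection} and the proof of Proposition~\ref{semiconcavity}(A) is an honest nonnegative supersolution — corrected by a barrier of the kind constructed in the penalized problem (Lemmata~\ref{g^k_est}, \ref{sigmasign}), arranged so that its boundary trace is $\ge1$ on the thin cylinder; Lemma~\ref{Harnack_type} then delivers a \emph{universal} lower bound for this function on the interior slab $\overline B_{1/2}^*\times[\tfrac14,\tfrac34]\times[-\tfrac{\delta_0^2}{2},0]$.

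Unravelling that normalization gives $\tilde u_y\ge-(1-2c)M$ on the slab; transferring back to the thin boundary via $\tilde\sigma(x,t)\ge\tilde u_y(x,\tfrac12,t)-\tfrac12 rC_0$ and absorbing the error $\tfrac12 rC_0\le (C_0/2\kappa)M\le cM$ (again by taking $\kappa$ large) yields $\tilde\sigma\ge-(1-c)M$ on $Q_\nu^*(0,0)$ for $\nu:=\min\{\tfrac12,\delta_0/\sqrt2\}$, i.e.\ $m(\nu r)\le(1-c)m(r)$ after unscaling. I expect the genuinely delicate point to be the construction in the previous paragraph: producing a nonnegative supersolution whose trace is bounded below by $1$ on the thin cylinder, given that near $Q_1^*$ one controls $\tilde u_y$ from below only through the non-sharp estimate $|\tilde u_y|\le C_0$ of Proposition~\ref{semiconcavity}(A). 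This forces one either to work with the gap $u^*-\tilde\varphi$ together with the quantitative positivity of $u-\varphi$ at height $\sim\gamma$ that is extracted inside the proof of Lemma~\ref{measure}, or to add a barrier tuned so that the resulting function is nonnegative all the way down to $Q_1^*$ while still exceeding $1$ on the thin cylinder; in either case one must then verify that the resulting contraction factor $1-c$ is universal.
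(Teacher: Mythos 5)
Your proposal uses exactly the two ingredients the paper uses (Lemma~\ref{measure} to find a thin cylinder where $\sigma$ is small, and Lemma~\ref{Harnack_type} to spread that bound to an interior slab, followed by transfer via semi-concavity in $y$), and the dichotomy ``if $m(r)\ge\kappa r$ then $m(\nu r)\le(1-c)m(r)$'' would, once established, yield the required decay. So the overall strategy is the same as the paper's. But there is a genuine gap, which you yourself flag and do not close, and it is precisely where your formulation diverges from the paper's.

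The problem is the choice of iterated quantity. You iterate $m(r)=\sup_{Q_r^*(x_0,t_0)}(-\sigma)$, which only controls the boundary trace $\sigma=u_{y^+}|_{Q_1^*}$. To apply Lemma~\ref{Harnack_type} you must feed it a \emph{nonnegative} supersolution defined on the whole half-slab, and the natural candidate $w=(u_y+M)/(\,\cdots\,)$ is nonnegative only if $u_y\ge -M$ throughout $Q^*_{r}(x_0,t_0)\times(0,r)$. Semi-concavity (Proposition~\ref{semiconcavity}(C)) gives $u_{yy}\le C_0$, hence $u_y(x,y,t)\le\sigma(x,t)+C_0y$ — a one-sided estimate in the \emph{wrong} direction: $\sigma\ge -M$ does not propagate into a lower bound for $u_y$ in the interior. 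The only lower bound you have a priori on $u_y$ in the interior is the crude $|u_y|\le C$ from Proposition~\ref{semiconcavity}(A), which does not degenerate with $M$ and so cannot close the dichotomy. Your proposed detours (the gap $u^*-\tilde\varphi$, or adding a barrier ``tuned'' so the supersolution remains nonnegative down to $Q_1^*$) are left as wishes; you do not exhibit a concrete nonnegative supersolution whose trace is $\ge 1$ on the thin cylinder, and it is not clear that either detour can produce one without effectively re-deriving the interior lower bound on $u_y$ that you are missing.

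The paper resolves this by choosing a different inductive statement: it iterates the \emph{slab} bound $u_y\ge -C\theta^k$ on $Q^*_{r^k}(x_0,t_0)\times(0,r^k)$, not just the trace bound on $Q^*_{r^k}(x_0,t_0)$. With that hypothesis, $w=\frac{u_y+C\theta^k}{-\mu r^k+C\theta^k}$ is immediately nonnegative on the slab — there is no ``delicate construction'' needed — and Lemma~\ref{measure} (applied at the original scale with $\gamma=\mu r^k$, avoiding the rescaling of the obstacle and of $\|u\|_\infty$ that your approach would need to track) gives a thin cylinder on which $\lim_{y\to0^+}w\ge1$. After Lemma~\ref{Harnack_type}, the semi-concavity $u_{yy}\le C_0$ is used in the \emph{useful} direction — from the interior slab down to small $y$ — to reestablish the slab bound at the next scale, with the error $C_0\mu r^k$ absorbed by the choice $r\ll\theta$. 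If you replace your iterated quantity $m(r)$ by the interior slab infimum of $u_y$, your argument becomes the paper's and the gap disappears; as written, the proposal is incomplete.
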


\begin{proof}
Our aim is to show by induction that for any $k \in \N$
\begin{equation} \label{sigmaregularity1}
u_y(X,t) \geq - C \theta^k, \ \ \text{ for every } \ \ (X,t) \in Q^*_{r^k}(x_0,t_0) \times \{y \in (0,r^k)\}
\end{equation}
where $0<r <<\theta<1$ to be chosen and $C>0$ universal.  We proceed by induction. For $k=1$ it follows by \textit{(A)} of Proposition \ref{semiconcavity} by choosing $C$ appropriately. We assume that (\ref{sigmaregularity1}) holds for some $k$ and we prove it for $k+1$.

We define 
$$w=\frac{u_y+C\theta^k}{-\mu r^k+C\theta^k}, \ \ \text{ in } \ \ Q^*_{r^k}(x_0,t_0) \times \{y \in (0,r^k)\}$$
where $0<\mu<1$ a small constant to be chosen. Then by the hypothesis of the induction and choosing $r<\theta$ and $\mu<C$ we have that $w \geq 0$ in $Q^*_{r^k}(x_0,t_0) \times \{y \in (0,r^k)\}$. Moreover, $\mathcal{M}^-\left(D^2w,\frac{\lambda}{n},\Lambda\right)-w_t \leq 0$ in $Q^*_{r^k}(x_0,t_0) \times \{y \in (0,r^k)\}$. We observe also that 
$$\lim_{y \to 0^+} w(x,y,t)=\frac{\sigma(x,t)+C\theta^k}{-\mu r^k+C\theta^k}, \ \ \text{ for } \ \ (x,t) \in Q^*_{r^k}(x_0,t_0).$$

On the other hand applying Lemma \ref{measure} around $(x_0,t_0) \in \Omega^* \cap Q^*_{1/2}$ with $\gamma = \mu r^k<\mu r<\frac{1}{2}$ we get that there exists $Q^*_{\bar{C} \mu r^k}(\bar{x},\bar{t}) \subset Q^*_{ \mu r^k}(x_0,t_0) \cap \Omega_{ \mu r^k}^*$, where $0<\bar{C}<1$ depends only on $K$, $n, \lambda , \Lambda$ and $\rho$. That is, $\lim_{y \to 0^+} w(x,y,t)\geq 1$ for $(x,t) \in Q^*_{\bar{C} \mu r^k}(\bar{x},\bar{t}).$ Therefore, $w$ satisfies the assumptions of Lemma \ref{Harnack_type} in $Q^*_{r^k}(x_0,t_0) \times (0,r^k)$. So we apply Lemma \ref{Harnack_type} to the rescaled function $W(x,y,t):=w(\mu r^kx+x_0, \mu r^k y, (\mu r^k)^2t+t_0)$ in $K_1$ and obtain that
\begin{equation} \label{sigmaregularity2}
w \geq \varepsilon, \ \ \text{ in } \ \overline{B}^*_{\frac{\mu r^k}{2}}(x_0) \times \left[\frac{\mu r^k}{4}, \frac{3\mu r^k}{4}\right] \times \left[t_0 - \frac{(\bar{C} \mu r^k)^2}{2}, t_0 \right]
\end{equation}
where $\varepsilon=\varepsilon (\bar{C},n,\lambda,\Lambda)>0$, that is, $u_y \geq -C\theta^k+\frac{\varepsilon C \theta^k}{2}$ using that $r<\theta$ and choosing $\mu < \frac{C}{2}$. 

Now to fill the gap of $y \in  \left( 0, \frac{\mu r^k}{4}\right]$ we integrate $u_{yy}$ with respect to $y$ and use \textit{(C)} of Proposition \ref{semiconcavity}. For $(x,t) \in \overline{B}^*_{\frac{\mu r^k}{2}}(x_0) \times \left[t_0 - \frac{(\bar{C} \mu r^k)^2}{2}, t_0 \right]$ we have
\begin{align*}
u_y\left(x,\frac{\mu r^k}{2},t\right)-u_y(x,y,t) =\int_y^{\frac{\mu r^k}{2}} u_{yy} (x,h,t) \ dh \leq C_0 \frac{\mu r^k}{2} -C_0 y
\end{align*} 
where $C_0>0$ the constant of Proposition \ref{semiconcavity}. Then $u_y(x,y,t) \geq -C\theta^k+\frac{\varepsilon C \theta^k}{2}-C_0 \frac{\mu r^k}{2}$. 

Therefore in $\overline{B}^*_{\frac{\mu r^k}{2}}(x_0) \times \left(0, \frac{3\mu r^k}{4}\right] \times \left[t_0 - \frac{(\bar{C} \mu r^k)^2}{2}, t_0 \right]$ we have that $u_y(x,y,t) \geq -C\theta^k+\frac{\varepsilon C \theta^k}{2}-C_0 \mu r^k.$ We choose $0<r<\min \left\lbrace \frac{\mu}{2}, \frac{\bar{C}\mu}{\sqrt{2}} \right\rbrace<\frac{1}{2}$ then the above holds in $\overline{B}^*_{r^{k+1}}(x_0) \times ( 0, r^{k+1} ) \times \left[ t_0 - ( r^{k+1})^2, t_0 \right]$. Also using that $r<\theta$ and choosing $\mu < \frac{C\varepsilon}{4C_0}$ and $\theta>1-\frac{\varepsilon}{4}$ we have that $-C\theta^k+\frac{\varepsilon C \theta^k}{2}-C_0 \mu r^k \geq -C \theta^{k+1}$ and the induction is complete.

Taking $y \to 0^+$ in (\ref{sigmaregularity2}) we have that for any $k \in \N$
$$\sigma(x,t) \geq - C \theta^k, \ \ \text{ for every } \ \ (x,t) \in Q^*_{r^k}(x_0,t_0)$$
where $0<r <<\theta<1$ and $C>0$ universal. The desired regularity for $\sigma$ follows in a standard way. 
\end{proof}

We are ready now to obtain the proof of the main theorem of this work.

\begin{proof}[Proof of Theorem \ref{main_thin}]
First we use Lemma \ref{sigmaregularity} to get the regularity of $\sigma$ around $P_0 \in \Gamma^* \cap Q^*_{1/2}$. So Lemma \ref{sigmaregularity} gives that $\sigma (x_0,t_0)=0$. Indeed we know that $\sigma=0$ in $\Omega^*$ and since $\Gamma^* = \partial \Omega^* \cap Q_1^*$ there exists $\{ (x_k,t_k) \}_{k \in \N} \subset \Omega^* \cap \overline{Q}_{1/2}^*$ so that $(x_k,t_k) \to (x_0,t_0)$ as $k \to \infty$. We have $0 \geq \sigma(x_0,t_0) \geq -C \left( |x_0-x_k|+|t_0-t_k|^{1/2}\right)^\alpha$ for any large $k \in \N$. Thus taking $k \to \infty$ we get the desired. In addition we have that $0 \geq \sigma(x,t) \geq -C \left( |x-x_0|+|t-t_0|^{1/2}\right)^\alpha$, for any $(x,t) \in \overline{Q}_{1/4}^*(x_0,t_0)$. Indeed, we consider again $\{ (x_k,t_k) \}_{k \in \N} \subset \Omega^* \cap \overline{Q}_{1/2}^*$ so that $(x_k,t_k) \to (x_0,t_0)$ as $k \to \infty$. We have $0 \geq \sigma(x,t) \geq -C \left( |x-x_k|+|t-t_k|^{1/2}\right)^\alpha$ for any large $k \in \N$ and any $(x,t) \in \overline{Q}_{1/4}^*(x_0,t_0)$ and we let $k \to \infty$.

On the other hand we know that $u_y=\sigma$ on $Q_1^*$ in the classical sense (thus, in the viscosity sense as well). Then once the Neumann data $\sigma$ is $H^\alpha$ we can apply Theorem 17 of \cite{CM19} in $\overline{Q}_{1/4}^+(x_0,t_0)$ to complete the proof.
\end{proof}

\section*{Acknowledgments}
This work is part of my Ph.D thesis. I would like to thank my thesis advisor, Professor Emmanouil Milakis for his guidance and fruitful discussions regarding the topics of this paper. This work was co-funded by the European Regional Development Fund and the Republic of Cyprus through the
Research and Innovation Foundation (Project: EXCELLENCE/1216/0025).

\bibliographystyle{plain}   
\bibliography{biblio}             

\begin{thebibliography}{10}

\bibitem{AU96}
A.~Arkhipova and N.~Uraltseva.
\newblock Sharp estimates for solutions of a parabolic {S}ignorini problem.
\newblock {\em Math. Nachr.}, 177:11--29, 1996.

\bibitem{ac}
I.~Athanasopoulos and L.~Caffarelli.
\newblock Optimal regularity of lower dimensional obstacle problems.
\newblock {\em Zap. Nauchn. Sem. S.-Peterburg. Otdel. Mat. Inst. Steklov.
  (POMI)}, 310(Kraev. Zadachi Mat. Fiz. i Smezh. Vopr. Teor. Funkts. 35
  [34]):49--66, 226, 2004.

\bibitem{ACM18}
I.~Athanasopoulos, L.~Caffarelli, and E.~Milakis.
\newblock On the regularity of the non-dynamic parabolic fractional obstacle
  problem.
\newblock {\em J. Differential Equations}, 265(6):2614--2647, 2018.

\bibitem{ACM19}
I.~Athanasopoulos, L.~Caffarelli, and E.~Milakis.
\newblock {Parabolic Obstacle Problems. Quasi-convexity and Regularity}.
\newblock {\em Ann. Sc. Norm. Super. Pisa Cl. Sci. (5)}, XIX:1--45, 2019.

\bibitem{ACS}
I.~Athanasopoulos, L.~Caffarelli, and S.~Salsa.
\newblock The structure of the free boundary for lower dimensional obstacle
  problems.
\newblock {\em Amer. J. Math.}, 130(2):485--498, 2008.

\bibitem{aparab}
I.~Athanasopoulous.
\newblock Regularity of the solution of an evolution problem with inequalities
  on the boundary.
\newblock {\em Comm. Partial Differential Equations}, 7(12):1453--1465, 1982.

\bibitem{caf79}
L.~Caffarelli.
\newblock Further regularity for the {S}ignorini problem.
\newblock {\em Comm. Partial Differential Equations}, 4(9):1067--1075, 1979.

\bibitem{CF13}
L.~Caffarelli and A.~Figalli.
\newblock Regularity of solutions to the parabolic fractional obstacle problem.
\newblock {\em J. Reine Angew. Math.}, 680:191--233, 2013.

\bibitem{CSS}
L.~Caffarelli, S.~Salsa, and L.~Silvestre.
\newblock Regularity estimates for the solution and the free boundary of the
  obstacle problem for the fractional {L}aplacian.
\newblock {\em Invent. Math.}, 171(2):425--461, 2008.

\bibitem{CS07}
L.~Caffarelli and L.~Silvestre.
\newblock An extension problem related to the fractional {L}aplacian.
\newblock {\em Comm. Partial Differential Equations}, 32(7-9):1245--1260, 2007.

\bibitem{CM19}
G.~Chatzigeorgiou and E.~Milakis.
\newblock Regularity for fully nonlinear parabolic equations with oblique
  boundary data.
\newblock {\em Rev. Mat. Iberoam.}, 1-48, 2019, Accepted.

\bibitem{DGP17}
D.~Danielli, N.~Garofalo, A.~Petrosyan, and T.~To.
\newblock Optimal regularity and the free boundary in the parabolic {S}ignorini
  problem.
\newblock {\em Mem. Amer. Math. Soc.}, 249(1181):v + 103, 2017.

\bibitem{Fern16}
X.~Fern\'{a}ndez-Real.
\newblock {$C^{1,\alpha}$} estimates for the fully nonlinear {S}ignorini
  problem.
\newblock {\em Calc. Var. Partial Differential Equations}, 55(4):Art. 94, 20,
  2016.

\bibitem{GPS16}
N.~Garofalo, A.~Petrosyan, and M.~Smit Vega~Garcia.
\newblock An epiperimetric inequality approach to the regularity of the free
  boundary in the {S}ignorini problem with variable coefficients.
\newblock {\em J. Math. Pures Appl. (9)}, 105(6):745--787, 2016.

\bibitem{nestor}
N.~Guillen.
\newblock Optimal regularity for the signorini problem.
\newblock {\em Calculus of Variations and Partial Differential Equations, 2009,
  Volume 36, Number 4, Pages 533-546}, 36(4):533–--546, 2009.

\bibitem{KRS17}
H.~Koch, A.~R\"uland, and W.~Shi.
\newblock The variable coefficient thin obstacle problem: {O}ptimal regularity
  and regularity of the regular free boundary.
\newblock {\em Ann. Inst. H. Poincar\'e Anal. Non Lin\'eaire}, 34(4):845--897,
  2017.

\bibitem{ms1}
E.~Milakis and L.~Silvestre.
\newblock Regularity for fully nonlinear elliptic equations with {N}eumann
  boundary data.
\newblock {\em Comm. Partial Differential Equations}, 31(7-9):1227--1252, 2006.

\bibitem{ms2}
E.~Milakis and L.~Silvestre.
\newblock Regularity for the nonlinear {S}ignorini problem.
\newblock {\em Advances in Math.}, 217:1301--1312, 2008.

\bibitem{PS07}
A.~Petrosyan and H.~Shahgholian.
\newblock Parabolic obstacle problems applied to finance.
\newblock In {\em Recent developments in nonlinear partial differential
  equations}, volume 439 of {\em Contemp. Math.}, pages 117--133. Amer. Math.
  Soc., Providence, RI, 2007.

\bibitem{R-OS17}
X.~Ros-Oton and J.~Serra.
\newblock The structure of the free boundary in the fully nonlinear thin
  obstacle problem.
\newblock {\em Adv. Math.}, 316:710--747, 2017.

\bibitem{Sh}
H.~Shahgholian.
\newblock Free boundary regularity close to initial state for parabolic
  obstacle problem.
\newblock {\em Trans. Amer. Math. Soc.}, 360(4):2077--2087, 2008.

\bibitem{S07}
L.~Silvestre.
\newblock Regularity of the obstacle problem for a fractional power of the
  {L}aplace operator.
\newblock {\em Comm. Pure Appl. Math.}, 60(1):67--112, 2007.

\bibitem{U85}
N.~Uraltseva.
\newblock H\"{o}lder continuity of gradients of solutions of parabolic
  equations with boundary conditions of {S}ignorini type.
\newblock {\em Dokl. Akad. Nauk SSSR}, 280(3):563--565, 1985.

\bibitem{u}
N.~Uraltseva.
\newblock On the regularity of solutions of variational inequalities.
\newblock {\em Uspekhi Mat. Nauk}, 42(6(258)):151--174, 248, 1987.

\bibitem{Wang1}
L.~Wang.
\newblock On the regularity theory of fully nonlinear parabolic equations. {I}.
\newblock {\em Comm. Pure Appl. Math.}, 45(1):27--76, 1992.

\bibitem{Wang2}
L.~Wang.
\newblock On the regularity theory of fully nonlinear parabolic equations.
  {II}.
\newblock {\em Comm. Pure Appl. Math.}, 45(2):141--178, 1992.

\end{thebibliography}
\index{Bibliography@\emph{Bibliography}}%

\vspace{3em}


%

\end{document}